\documentclass{amsart}

\usepackage{lineno,amsmath,amsthm,amssymb,fancybox,fancyhdr,multirow,ulem,tikz-cd,caption,longtable,color}

\newtheorem{theorem}{Theorem}
\newtheorem{lemma}[theorem]{Lemma}
\newtheorem{definition}[theorem]{Definition}
\newtheorem{corollary}[theorem]{Corollary}
\newtheorem{remark}[theorem]{Remark}

\newcommand{\Z}{{\mathbb Z}}
\newcommand{\Q}{{\mathbb Q}}
\newcommand{\R}{{\mathbb R}}
\newcommand{\C}{{\mathbb C}}
\newcommand{\F}{{\mathbb F}}
\newcommand{\Gal}{{\rm Gal}}

\newcommand{\Cl}{{\rm Cl}}
\newcommand{\Clodd}{{\rm Cl}_o}
\newcommand{\legendre}[2]{\genfrac{(}{)}{}{}{#1}{#2}}

\newcommand{\bimax}[4]{$#1=#2$ for $K=\Q(\sqrt{#3},\sqrt{#4})$}
\newcommand{\trimax}[5]{$#1=#2$ for $K=\Q(\sqrt{#3},\sqrt{#4},\sqrt{#5})$}

\title[Imaginary multiquadratic number fields of exponent $3$ and $5$]{Imaginary multiquadratic number fields
with class group of exponent $3$ and $5$}
\author{J\"urgen Kl\"uners} \address{Universit\"at Paderborn, Institut f\"ur 
  Mathematik, Warburger Str. 100, 33098 Paderborn, Germany}

\email{klueners@math.uni-paderborn.de}

\author{Toru Komatsu}
\address{Tokyo University of Science, Faculty of Science and Technology, Department of Mathematics,
  2641 Yamazaki, Noda-shi, Chiba-ken 278-8510, Japan}
\email{komatsu@ma.noda.tus.ac.jp}
\date{}
\subjclass[2010]{Primary 11R29; Secondary 11R11, 11R20, 11Y40}
\begin{document}

\begin{abstract}
In this paper we obtain 
a complete list of imaginary $n$-quadratic fields
with class groups of exponent $3$ and $5$
under ERH for every positive integer $n$
where an $n$-quadratic field is a number field of degree $2^n$
represented as the composite of $n$ quadratic fields.
\end{abstract}

\maketitle

\section{Introduction}
The classification, of number fields with given invariants has a long history, going back at least to Gauss. Brown and Parry \cite{BrownParry} determine
a complete list of imaginary biquadratic fields
with class number $1$.
Yamamura \cite{Yamamura} gives
a complete list of imaginary abelian fields
with class number $1$.
Jung and Kwon \cite{JungKwon} show
a complete list of imaginary biquadratic fields
of class number $3$.
Elsenhans, Kl\"uners and Nicolae \cite[Theorem 1]{EKN}
present a complete list of imaginary quadratic fields
with class groups of exponent $E$
for every $E\leq 5$ and $E=8$ under the extended
Riemann hypothesis (ERH).
We say that $K$ is an $n$-quadratic field
if $K$ is a Galois extension of $\Q$
with $\Gal(K/\Q)\simeq C_2^n$ where
$C_2$ is the cyclic group of order $2$.
In this paper we obtain 
a complete list of imaginary $n$-quadratic fields
with class groups of exponent $3$ or $5$,
that is,
isomorphic to direct products $C_u^r$
of the cyclic group $C_u$ of order $u$
with $u=3,5$ and positive integers $r$, 
under ERH for every positive integer $n$.
We call a $2$-quadratic (resp.\ $3$-quadratic) field
as a biquadratic (resp.\ triquadratic) field.
In this paper we show

\begin{theorem}
\label{maintheorem}
(1)
There exist at least $163,122,32$ and $1$ imaginary biquadratic fields
with class groups isomorphic to $C_3,C_3^2,C_3^3$ and $C_3^4$,
respectively.\newline
(2)
There exist at least $23,29,7$ and $1$ imaginary triquadratic fields
with class groups isomorphic to $C_3,C_3^2,C_3^3$ and $C_3^4$,
respectively.\newline
(3)
Under ERH,
there exist no imaginary biquadratic and triquadratic fields
with class groups of exponent $3$ other than those in (1) and (2).
\end{theorem}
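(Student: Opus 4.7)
The plan splits into two parts: (1)--(2) produce lower bounds by explicit enumeration, while (3) shows that the enumeration is exhaustive.

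For (1) and (2) I would enumerate imaginary $n$-quadratic fields $K$ (for $n=2,3$) of small absolute discriminant by choosing $n$ multiplicatively independent squarefree integers $d_1,\ldots,d_n$ with at least one negative, computing $\Cl(K)$ under ERH in Pari/GP or Magma, and retaining those isomorphic to $C_3^r$ for $r=1,\ldots,4$. Exhibiting the stated counts $163,122,32,1$ (biquadratic) and $23,29,7,1$ (triquadratic) is then a finite check requiring no a~priori bound.

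Part (3) turns this enumeration into a classification. The structural input is that $\Cl(K)$ has exponent $3$, so $\#\Cl(K)$ is a $3$-power and in particular the $2$-class group of $K$ is trivial. Since $\gcd(3,2^n)=1$, $\Cl(K)_3$ is a semisimple $\F_3[\Gal(K/\Q)]$-module whose isotypic components are indexed by the characters of $G=C_2^n$ and thus correspond to the quadratic subfields of $K$; consequently every quadratic subfield $k\subseteq K$ must satisfy $\Cl(k)_3$ of exponent $3$. For imaginary $k$ the complete ERH-conditional list follows from \cite{EKN}; for real $k$ one combines \cite{EKN} with Scholz's reflection theorem relating the $3$-ranks of $\Q(\sqrt{d})$ and $\Q(\sqrt{-3d})$, together with a direct ERH-based check that no element of order $9$ can occur, to produce a finite list of admissible real quadratic subfields.

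A complementary constraint comes from genus theory: triviality of the $2$-class group of $K$, via a Kuroda-type class number formula for multiquadratic extensions, forces very few primes to ramify in $K$ and forces the $2$-class groups of the quadratic subfields to be trivial or very small. Combining the admissible-subfield lists with the ramification constraints yields an explicit finite list of candidate conductors, hence of candidate fields $K$, each of which can then be verified or rejected by direct computation of $\Cl(K)$. The main obstacle will be controlling the real quadratic subfields: an unconditional finite classification of real quadratic fields with $3$-class group of exponent $3$ is not available, so this step relies essentially on ERH together with Scholz reflection and the list in \cite{EKN}, and care is needed to ensure that no exceptional real quadratic subfield is overlooked before the search range in parts (1)--(2) is declared complete.
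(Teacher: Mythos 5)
Your reduction to quadratic subfields is sound as far as it goes (the odd part of $\Cl(k)$ injects into $\Cl(K)$ for every quadratic subfield $k$, so each $3$-class group has exponent dividing $3$), but the step that is supposed to make the search finite fails. There is no finiteness statement, even under ERH, for real quadratic fields whose $3$-class group has exponent dividing $3$: Scholz reflection only compares $3$-ranks of $\Q(\sqrt{d})$ and $\Q(\sqrt{-3d})$ and cannot exclude the (conjecturally infinite, and in any case not provably finite) family of real quadratic fields with trivial $3$-class group, e.g.\ those of class number $1$. So your ``finite list of admissible real quadratic subfields'' does not exist, and combining it with ramification constraints cannot produce a finite list of candidate conductors. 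A second, related gap: for the imaginary quadratic subfields the condition inherited from $K$ constrains only the $3$-part, and the relevant subfield $\Q(\sqrt{p^*q^*})$ (one negative, one positive prime discriminant) has class group of the form $C_2\times C_3^r$; the lists of \cite{EKN} cover exponent $3$ completely under ERH, but exponent $6$ only up to $|D|<3.1\times 10^{20}$, and the paper explicitly notes that the full exponent-$6$ classification is not available even under ERH. So ``the complete ERH-conditional list follows from \cite{EKN}'' is not enough at exactly the point where the finiteness must come from.

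The paper avoids both problems by never enumerating real quadratic fields. Fr\"ohlich's criterion (Theorems~\ref{FroehlichOddCri} and \ref{FroehlichEquiv}) shows that oddness of $h(K)$ forces $K$ to be a composite of quadratic fields of prime power conductor, i.e.\ generated by $\sqrt{p^*}$ with $p^*\in P^*$, and Corollary~\ref{nonIRR} restricts the sign patterns; Theorem~\ref{allforms} then reduces everything to (i) imaginary quadratic fields with $E\mid 3$ (the set $I_1$, from \cite{EKN}) and (ii) for each fixed $p^*\in I_1$, the set $R_{p^*}$ of positive prime discriminants $q^*$ with $E(\Q(\sqrt{p^*q^*}))\mid 6$. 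The genuinely new ingredient, which your proposal lacks, is the explicit ERH bound making $R_{p^*}$ computable (Lemma~\ref{RpBound} and Section~\ref{lastsec}): since the $2$-part of $\Cl(\Q(\sqrt{p^*q^*}))$ is generated by the ramified prime above $p$, the least split prime $\ell$ yields via Boyd--Kisilevsky a lower bound $\ell\geq\sqrt[3]{|D|/(4p)}$, which contradicts the Bach--Sorenson ERH upper bound on $\ell$ once $|D|>2.4\times 10^{15}$; this brings the search inside the range covered by \cite{EKN} (or is replaced by the norm-equation search of Theorem~\ref{thm:cases}). The real quadratic subfield $\Q(\sqrt{q^*})$ then has its class group checked only for the finitely many surviving candidates, via the Kuroda/Lemmermeyer decomposition (Theorem~\ref{LemmerOdd}) and the oddness criterion. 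Without an argument of this kind your plan cannot certify completeness in part (3).
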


\begin{theorem}
\label{maintheoremFive}
(1)
There exist at least $243,274,54$ and $1$ imaginary biquadratic fields
with class groups isomorphic to $C_5,C_5^2,C_5^3$ and $C_5^4$,
respectively.\newline
(2)
There exist at least $18,26,6$ and $1$ imaginary triquadratic fields
with class groups isomorphic to $C_5,C_5^2,C_5^3$ and $C_5^4$,
respectively.\newline
(3)
Under ERH,
there exist no imaginary biquadratic and triquadratic fields
with class groups of exponent $5$ other than those in (1) and (2).
\end{theorem}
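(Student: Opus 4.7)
The plan is to parallel the exponent-$3$ argument of Theorem~\ref{maintheorem}, replacing $3$ by $5$ throughout. Parts (1) and (2) are verified by exhibiting concrete examples: for each allowed structure $C_5^r$ one searches over small pairs (resp.\ triples) of squarefree radicands $d_i$ and computes $\Cl(K)$ in Magma or PARI/GP. Inside these computations ERH enters only through the Bach-style bounds used by the class-group algorithm, so the outcomes are conditional on ERH.

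The substantive content is the completeness assertion (3). The first step is to invoke genus theory for multiquadratic extensions: for an imaginary $n$-quadratic field $K$, if $t$ denotes the number of rational primes ramifying in $K/\Q$ (with the usual correction for the infinite place), a Kuroda-style genus computation yields $\mathrm{rk}_2\,\Cl(K) \geq t-n$. Since any group of exponent $5$ has trivial $2$-torsion, this forces $t \leq n$. In the biquadratic case at most two rational primes ramify, and in the triquadratic case at most three, which cuts the infinite search down to a very constrained family.

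Next, one must bound the sizes of the ramified primes themselves. Under ERH, one has explicit lower bounds on $|d_K|$ in terms of $[K:\Q]$ and $h(K)$ (Odlyzko--Poitou-style, strengthened under ERH). The possible $5$-ranks are bounded a priori by Kuroda's class-number formula applied to the quadratic subfields of $K$, whose class groups of exponent $5$ are already classified in \cite{EKN}; this gives a uniform upper bound on $h(K)$, and hence an explicit upper bound on each prime dividing $d_K$. The finitely many remaining candidates can then be enumerated and tested directly.

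The main obstacle is the combinatorial enumeration in the triquadratic case: one must run through all admissible triples $(d_1,d_2,d_3)$ whose compositum is imaginary, has at most three ramified primes, and whose quadratic and biquadratic subfields have class groups compatible with an exponent-$5$ overfield. Making this search feasible depends on the ERH-conditional bounds being sufficiently tight to restrict the ramified primes to a small range, and on each class-group computation being certifiable under ERH; both steps are standard but require careful bookkeeping.
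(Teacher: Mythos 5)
There is a genuine gap, and it sits exactly where the paper has to work hardest. Your reduction (genus theory forcing few ramified primes, then bounding the primes, then enumerating) is the right general shape, and your first step is essentially a coarser version of the paper's use of Fr\"ohlich's criterion (Theorem~\ref{FroehlichOddCri}/\ref{FroehlichEquiv}), which not only limits the number of ramified primes but forces $K$ to be a compositum of quadratic fields of \emph{prime power conductor} and imposes the extra symbol conditions (plus Corollary~\ref{nonIRR} excluding one imaginary and two real quadratic constituents). The real problem is your second step. By Theorem~\ref{allforms}, the case of a real constituent $\sqrt{q^*}$ forces control of the \emph{imaginary} quadratic field $\Q(\sqrt{p^*q^*})$ with class group of exponent dividing $2u=10$, i.e.\ of type $C_2\times C_5^r$. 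These fields are \emph{not} classified in \cite{EKN}, even under ERH (the paper says so explicitly), so your plan to read off the needed quadratic data from \cite{EKN} via Kuroda/Lemmermeyer is circular at precisely the point where new work is required. This is why the paper devotes Section~\ref{lastsec} to computing the sets $R_{p^*}$ for $u=5$ by a dedicated norm-equation search (Theorem~\ref{thm:cases}), bounding the smallest split prime $\ell$ under ERH by Bach--Sorenson and using the Boyd--Kisilevsky norm inequality $N_{k/\Q}(\alpha)\geq|D|/4$ to force $\ell^{u}$ or $p\ell^{u}$ to be large; for $u=3$ there is a shortcut via the exponent-$6$ table of \cite{EKN} (Lemma~\ref{RpBound}), but no such table exists for exponent $10$.

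A second, related flaw: your proposed bound on the ramified primes via a ``uniform upper bound on $h(K)$'' plus Odlyzko--Poitou-style discriminant inequalities does not exist. Exponent $5$ does not bound $h(K)$ a priori, because the $5$-rank is not bounded before the classification is done (bounding it through the quadratic subfields again requires the unclassified $C_2\times C_5^r$ fields). The correct mechanism, both in \cite{BoydKisi} and in the paper, bounds the \emph{exponent} from below in terms of $|D|$ (smallest split prime to the power $E(k)$ must exceed $|D|/4$, and under ERH that prime is $O((\log|D|)^2)$), which is what makes $\mathcal{K}_1$ and $\mathcal{K}_{2b}$ finite and effectively enumerable. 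So as written, your plan would not terminate in a provably complete list: the enumeration of $\mathcal{K}_{2b}$ (and hence $\mathcal{K}_{3b}$) has no justified bound. The existence parts (1) and (2) of your proposal are unproblematic, since they only require exhibiting fields and computing their class groups.
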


\begin{remark}
\label{greaterthanthree}
{\rm
It is known that,
for every $n\geq 4$,
there exist no imaginary $n$-quadratic fields
with odd class numbers
(cf.\ Theorem~\ref{FroehlichOddCri}
\cite[Theorem 5.2]{Froehlich} below).
}
\end{remark}


The computed tables of fields and some programs can be found on the web page
https://math.uni-paderborn.de/en/ag/ca/research/exponent/.

\section{Decomposition with prime power conductor subfields}

In order to study the $2$-part of class groups
of $n$-quadratic fields,
let us introduce a symbol $[x,y]$
defined in Fr\"ohlich's book \cite[\S 4]{Froehlich}.
In \cite{Froehlich} he defines the symbol as an element in $\Z_{\ell}$
and uses it as an element in $\F_{\ell}$ to apply the theorems
where $\Z_{\ell}$ is the ring of $\ell$-adic integers
and $\F_{\ell}$ is the field of $\ell$ elements
for a prime number $\ell$.
In this paper we define the symbol $[x,y]$
as an element in $\F_{\ell}$ with $\ell=2$ from the beginning.
Let $S$ be a finite set of prime numbers,
and put $S'=S\cup\{-1\}$ if $2\in S$,
and $S'=S$ otherwise.
For a pair $(x,y)\in S'\times S$
with $x\not=y$ and $(x,y)\not=(-1,2)$
we define the symbol $[x,y]\in\F_2$ as follows.
Let $\overline{a}$ denote the image of an $a\in\Z$
under the canonical map $\Z\to \Z/2\Z\simeq \F_2$.
For an odd prime number $p$
let us fix a primitive $(p-1)$-st root of unity $w_p$
in the $p$-adic number field $\Q_p$. 
For $p\ne y\in S$ there exists
a rational integer $r_{p,y}$ such that
\begin{center}
$w_p^{r_{p,y}}\equiv y\pmod{p}$.
\end{center}
We define $[p,y]\in\F_2$ by $[p,y]=\overline{r_{p,y}}$
for such an integer $r_{p,y}$.
For the case $p=2$ we put $w_2=5$.
For $2\ne y\in S$ there exist
rational integers $r_{2,y}$ and $r_{-1,y}$ such that
\begin{center}
$w_2^{r_{2,y}}(-1)^{r_{-1,y}}\equiv y\pmod{8}$.
\end{center}
We define $[2,y]$ and $[-1,y]$ in $\F_2$
by $[2,y]=\overline{r_{2,y}}$ and
$[-1,y]=\overline{r_{-1,y}}$
for such integers $r_{2,y}$ and $r_{-1,y}$, respectively.
It is easy to see the following relation
between $[x,y]$ and the Legendre symbol
$\left(\dfrac{a}{p}\right)$. Note that the value $[-1,2]$ is not defined.

\begin{lemma}
\label{relationtoLegendre}
Let $[x,y]\in S'\times S$ be given with $x\ne y$. Then
\[ (-1)^{[x,y]} = \begin{cases}
\legendre{y}{x} & \mbox{if }x\notin\{-1,2\}\\
\legendre{x}{y} & \mbox{if }x \in\{-1,2\}, y\ne 2
\end{cases}.
\]
\end{lemma}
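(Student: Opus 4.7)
The plan is to split into the two cases of the statement and unwind the definitions, reducing each to an elementary computation.

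For the case $x=p\notin\{-1,2\}$, the main point is that the fixed primitive $(p-1)$-st root of unity $w_p\in\Q_p$ reduces modulo $p$ to a generator of $(\Z/p\Z)^\times$. Indeed $w_p$ has exact multiplicative order $p-1$ in $\Z_p^\times$, and the reduction map $\Z_p^\times\to\F_p^\times$ is injective on the group of $(p-1)$-st roots of unity: its kernel lies in the pro-$p$ group $1+p\Z_p$, which contains no nontrivial element of order dividing $p-1$ since $p-1$ is coprime to $p$. Consequently $r_{p,y}$ is the discrete logarithm of $y\bmod p$ with respect to a generator of the cyclic group $(\Z/p\Z)^\times$. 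Since the squares form the unique index-$2$ subgroup, namely the even powers of any generator, $y$ is a quadratic residue modulo $p$ if and only if $r_{p,y}$ is even, i.e.\ $[p,y]=0$. This yields $(-1)^{[p,y]}=\legendre{y}{p}$.

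For the case $x\in\{-1,2\}$ with $y$ an odd prime, I would verify the identity by a direct check of the four residue classes of $y$ modulo $8$. The group $(\Z/8\Z)^\times=\{1,3,5,7\}$ is isomorphic to $C_2\times C_2$ and is generated by $-1$ and $w_2=5$, so the pair $(r_{2,y},r_{-1,y})\bmod 2$ is determined uniquely by $y\bmod 8$: one finds $(0,0),(1,1),(1,0),(0,1)$ for $y\equiv 1,3,5,7\pmod 8$ respectively. Comparing these with the standard formulas $\legendre{2}{y}=(-1)^{(y^2-1)/8}$ and $\legendre{-1}{y}=(-1)^{(y-1)/2}$, which give $\legendre{2}{y}=1\iff y\equiv\pm 1\pmod 8$ and $\legendre{-1}{y}=1\iff y\equiv 1\pmod 4$, one checks in each of the four cases that $(-1)^{[2,y]}=\legendre{2}{y}$ and $(-1)^{[-1,y]}=\legendre{-1}{y}$.

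The argument contains no serious obstacle: it is essentially a definition chase once one knows that $w_p\bmod p$ is a primitive root. The only step that requires mild care is that Hensel-type lifting argument in the first case; the second case is a four-line table.
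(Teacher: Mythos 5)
Your proof is correct, and it is essentially the argument the paper has in mind: the paper states this lemma without proof (``It is easy to see\dots''), and the intended justification is exactly your definition chase --- $w_p \bmod p$ is a primitive root, so $[p,y]$ is the parity of the discrete logarithm of $y$, which detects quadratic residuosity; and for $x\in\{-1,2\}$ the identity reduces to the four-entry table modulo $8$ compared with the standard formulas for $\legendre{2}{y}$ and $\legendre{-1}{y}$. Both halves check out (including the injectivity of reduction on prime-to-$p$ roots of unity, which is the only point needing an argument), so nothing is missing.
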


For a number field $K$ of finite degree
let $\Cl(K)$ and $\Cl_+(K)$ denote
the class group and the narrow class group of $K$,
and $h(K)$ and $h_+(K)$
the class number and the narrow class number of $K$,
respectively.
Note that $\Cl(K)=\Cl_+(K)$ and $h(K)=h_+(K)$
if $K$ is totally imaginary.
For a number field $K$
let $G_+(K/\Q)$ denote
the narrow genus field of $K$ over $\Q$,
that is,
the maximal extension of $K$
which is unramified at all finite primes,
and is the composite $KL$ of $K$ and some
abelian extension $L$ of $\Q$.
For an abelian field $K$
let $\nu(K)$ denote the minimal number
of generators of $\Gal(K/\Q)$.
Note that $\nu(K)=n$ for an $n$-quadratic field $K$. Before we state the theorem we need a definition of a function.
\begin{definition}
	Let $K$ be an $n$-quadratic field not containing the cyclotomic field $\Q(\zeta_8)$, $p_1$ be the smallest prime number ramified in $K$ and $p>p_1$ be another prime number ramified in $K$. Then we define:
	\[a_{p_1}(p):=
	\begin{cases}
	[p_1,p] &\text{if $p_1\not=2$ or $\sqrt{2}\in K$,}\\ 
	{[-1,p]} &\text{if $\sqrt{-1}\in K$,}\\
	{[2,p]}+{[-1,p]} &\text{if $\sqrt{-2}\in K$.}\\
	\end{cases}\]
\end{definition}
Note that the condition $\Q(\zeta_8)\not\subseteq K$ implies that exactly one condition of the three different cases is true.

\begin{theorem}[Fr\"ohlich {\cite[Theorem 5.2]{Froehlich}}]
\label{FroehlichOddCri}
Let $K$ be an $n$-quadratic field,
and $S$ the set of prime numbers ramifying in $K$.\newline
(1)
If $h_+(K)$ is odd,
then $K=G_+(K/\Q)$.\newline
(2)
Assume $K=G_+(K/\Q)$.
Then $h_+(K)$ is odd if and only if
one of the following conditions (2.1),(2.2) and (2.3) holds:
\newline
(2.1)
$n=1$.\newline
(2.2)
$n=2$, and (i) or (ii) holds:
\begin{enumerate}
\item[(i)]
$S=\{2\}$, that is, $K=\Q(\sqrt{-1},\sqrt{-2})$,
\item[(ii)]
$S=\{p_1,p_2\}$ with distinct prime numbers $p_1<p_2$
such that
$[p_2,p_1]\not=0$ or $a_{p_1}(p_2)\not=0$.
\end{enumerate}
(2.3)
$n=3$, and (i) or (ii) holds:
\begin{enumerate}
\item[(i)]
$S=\{2,p_2\}$ with an odd prime number $p_2$ such that $[p_2,2]\not=0$,
\item[(ii)]
$S=\{p_1,p_2,p_3\}$ with $p_1<p_2<p_3$
such that $\det M\not=0$ where
\begin{center}
$M=
\begin{pmatrix}
[p_2,p_1] & 0 & [p_3,p_1]\\
a_{p_1}(p_2) & [p_3,p_2] & 0\\
0 & [p_2,p_3] & a_{p_1}(p_3)\\
\end{pmatrix}
$.
\end{center}
\end{enumerate}
\end{theorem}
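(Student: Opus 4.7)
The plan is to treat parts (1) and (2) separately, using Chevalley's genus theory as the foundation.

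For part (1), observe that by definition $G_+(K/\Q)$ is abelian over $\Q$ and, being the compositum of $K$ with further abelian extensions of $\Q$, has Galois group of exponent $2$; thus $\Gal(G_+(K/\Q)/K)$ is an elementary abelian $2$-group. Since the extension $G_+(K/\Q)/K$ is unramified at all finite primes, its degree divides $h_+(K)$. If $h_+(K)$ is odd, this forces $G_+(K/\Q)=K$, proving (1).

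For part (2) assume $K=G_+(K/\Q)$ and let $A$ be the $2$-Sylow of $\Cl_+(K)$; the goal is to determine when $A=0$. The case $n=1$ is immediate: for a quadratic field the genus group equals $A/A^2$, so vanishing of the former forces $A=0$. For $n\in\{2,3\}$ one first extracts from $K=G_+(K/\Q)$ sharp constraints on the ramification set $S$: the number and type of ramified primes is so limited that a short enumeration yields exactly the subcases (2.2)(i,ii) and (2.3)(i,ii) of the statement.

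The bulk of the work is then to prove, in each of those subcases, that $A=0$ is equivalent to the listed symbol/determinant condition. My approach would apply an iterated Chevalley ambiguous class number formula to each index-$2$ subfield $F\subset K$: $|A^{\Gal(K/F)}|$ is expressed in terms of ramification, unit indices, and Artin symbols of ramified primes in the cyclotomic fields $\Q(w_p)$, and by Lemma~\ref{relationtoLegendre} these Artin symbols are exactly the $[x,y]$. Assembling the data across the three quadratic subfields (for $n=2$) or the seven of them (for $n=3$) produces the matrix $M$ of (2.3)(ii), or its $1\times 1$ counterparts in (2.2)(ii); non-vanishing of the relevant determinant forces the ambient $2$-class group to be trivial.

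The main obstacle is the passage from ``every genus quotient along the tower is trivial'' to ``$A=0$'' itself. In principle $A$ could be a nontrivial finite $2$-group all of whose genus quotients along the tower vanish; what rules this out is the key content of Fr\"ohlich's analysis, where a minimal nontrivial $A$ would produce an unramified central $C_2$-extension of $K$, abelian over an intermediate field, contradicting the non-degeneracy of $M$. Carrying out this translation between central extensions and the symbols $[x,y]$ is the technical heart of the proof; once it is in place, the claimed equivalences in (2.1), (2.2), and (2.3) follow by direct casework and the explicit form of the matrix in each $n$.
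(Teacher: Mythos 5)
You should first note that the paper does not prove Theorem~\ref{FroehlichOddCri} at all: it is quoted from Fr\"ohlich \cite[Theorem 5.2]{Froehlich} as a known result, so there is no proof in the paper to compare with, and the only question is whether your argument stands on its own. Part (1) is essentially fine, except that your claim that $\Gal(G_+(K/\Q)/K)$ has exponent $2$ ``by definition'' is not automatic (the defining $L$ is an arbitrary abelian extension of $\Q$); you need the standard fact that the narrow genus field of a multiquadratic field is again multiquadratic (e.g.\ via Theorem~\ref{FroehlichEquiv} and the description of quadratic fields of prime power conductor), after which $[G_+(K/\Q):K]\mid h_+(K)$ gives the conclusion.

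Part (2), however, is an outline that defers exactly the point at issue. The equivalence between oddness of $h_+(K)$ and the symbol/determinant conditions is the entire content of Fr\"ohlich's theorem, and you yourself label the decisive step --- excluding a nontrivial $2$-part $A$ of $\Cl_+(K)$ all of whose genus quotients along the tower vanish, via unramified central extensions and their interaction with the symbols $[x,y]$ --- as ``the key content of Fr\"ohlich's analysis'' without supplying it. Chevalley's ambiguous class number formula applied to the index-$2$ subfields only controls ambiguous classes in each quadratic step; it does not by itself yield the matrix $M$, the asymmetric diagonal entries $a_{p_1}(p_j)$ encoding the case distinctions at $2$, or the equivalence $\det M\ne 0$ if and only if $A=0$, and no argument is given for why the assembled local data takes precisely this form. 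Moreover, the ``only if'' direction of (2) contains the assertion that $h_+(K)$ is even for every $n\ge 4$ with $K=G_+(K/\Q)$ (cf.\ Remark~\ref{greaterthanthree}); your case analysis treats only $n\in\{1,2,3\}$ and never addresses this. As written, the proposal is a reasonable roadmap of Fr\"ohlich's strategy, but the technical heart of the proof is missing.
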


\begin{remark}
\label{caseABCD}
{\rm 
(1) For an $n$-quadratic field $K$,
the cases (A), (B), (C) and (D)
at \S 5 in \cite{Froehlich} mean that
$\Q(\zeta_8)\cap K$ are equal to
$\Q(\zeta_8),\Q(\sqrt{-2}),\Q(\sqrt{-1}),$ and
$\Q(\sqrt{2})$, respectively,
where $\zeta_8$ is a primitive $8$-th root of unity in $\C$.
In our situation,
the invariant $\lambda$ for the case (B) is equal to $1$.\newline
(2) We have $[x,y]=-[x,y]$ since $[x,y]$ is an element in $\F_2$.
For general $\ell$,
the definition of $a_{p_1}(p)$ for the third case
is $a_{p_1}(p)={[2,p]}-{[-1,p]}$
in Theorem~\ref{FroehlichOddCri} (2.2) (ii).
The same adjustment occurs at the main diagonal of $M$
in Theorem~\ref{FroehlichOddCri} (2.3) (ii).
}
\end{remark}

\begin{theorem}[Fr\"ohlich {\cite[Theorem 2.15]{Froehlich}}]
\label{FroehlichEquiv}
For an abelian field $K$, the following conditions (i) and (ii)
are equivalent:
\begin{itemize}
\item[(i)]
$K=G_+(K/\Q)$.\quad
\item[(ii)]
$K$ is the composite of fields with prime power conductors.
\end{itemize}
\end{theorem}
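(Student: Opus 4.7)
The plan is to work character-theoretically via Kronecker--Weber. An abelian extension $K/\Q$ corresponds to a finite subgroup $X(K)\subseteq\widehat{\Q^{ab}}$ of Dirichlet characters, and each such $\chi$ admits a unique decomposition $\chi=\prod_p\chi_p$ into $p$-primary components, obtained from the Chinese remainder decomposition of its conductor. Let $X(K)_p$ denote the subgroup of $X(K)$ consisting of characters whose conductor is a power of $p$. Then condition (ii) translates literally to $X(K)=\bigoplus_p X(K)_p$ inside $\widehat{\Q^{ab}}$, that is, every $p$-component of every element of $X(K)$ already lies in $X(K)$.

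Define $Y:=\langle\chi_p:\chi\in X(K),\,p\text{ prime}\rangle\subseteq\widehat{\Q^{ab}}$. Since $\chi=\prod_p\chi_p$ we always have $X(K)\subseteq Y$, and the equality $X(K)=Y$ is precisely condition (ii). So Theorem~\ref{FroehlichEquiv} reduces to the single claim that $Y$ is the character group of the narrow genus field $G_+(K/\Q)$: given this, (i) becomes the assertion $X(K)=Y$, which is exactly (ii).

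To prove the key claim I would invoke the standard local-class-field-theoretic reformulation of ``unramified above $p$.'' For abelian $K\subseteq N\subseteq\Q^{ab}$ with character groups $X(K)\subseteq X(N)$, writing $X(N)^{(p)}:=\{\psi\in X(N):\psi_p=1\}$ for the subgroup of characters unramified at $p$, one has: $N/K$ is unramified at every prime of $K$ above $p$ if and only if
\[X(K)+X(N)^{(p)}=X(N).\]
This is Pontryagin dual to the statement that the $p$-inertia in $\Gal(N/\Q)$ maps isomorphically onto the $p$-inertia in $\Gal(K/\Q)$.

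Granting this equivalence, both inclusions of the key claim are then formal. For $Y\subseteq X(G_+(K/\Q))$, set $M:=\Q(Y)$; for each prime $p$, any generator $\chi_q$ of $Y$ with $q\ne p$ lies in $X(M)^{(p)}$, while $\chi_p$ is congruent modulo $X(M)^{(p)}$ to the parent $\chi\in X(K)$, giving $X(M)=X(K)+X(M)^{(p)}$, hence $M/K$ is unramified above $p$. Conversely, given any abelian $N\supseteq K$ with $N/K$ unramified at all finite primes and any $\psi\in X(N)$, the displayed equation lets me write, for each $p$, $\psi=\chi^{(p)}\eta^{(p)}$ with $\chi^{(p)}\in X(K)$ and $\eta^{(p)}_p=1$, whence $\psi_p=\chi^{(p)}_p\in Y$ and so $\psi=\prod_p\psi_p\in Y$. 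The main obstacle is the duality translation above: it is systematic local class field theory at each prime, and once it is admitted the remainder of the argument is a routine subgroup manipulation inside $\widehat{\Q^{ab}}$.
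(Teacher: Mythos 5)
The paper offers no proof of this statement at all: it is quoted directly from Fr\"ohlich \cite[Theorem 2.15]{Froehlich}, so there is no internal argument to compare against, and your proposal should be judged on its own. Judged that way, it is correct and complete in outline. The translation of (ii) into $X(K)=Y$ (with $Y$ generated by the $p$-primary parts $\chi_p$ of all $\chi\in X(K)$), the identification of the characters of $N$ unramified at $p$ with those having trivial $p$-component, and the reformulation of ``$N/K$ unramified above $p$'' as $X(K)+X(N)^{(p)}=X(N)$ are all valid; the last is just annihilator duality in the finite abelian group $\Gal(N/\Q)$ together with the fact that the inertia group at $p$ in $\Gal(\Q(\zeta_m)/\Q)\simeq\prod_q(\Z/q^{e_q}\Z)^\times$ is the factor at $q=p$ -- so the ``systematic local class field theory'' you invoke is really only ramification theory of cyclotomic fields, which makes your route rather more elementary than Fr\"ohlich's idelic framework while proving the sharper statement that $X\bigl(G_+(K/\Q)\bigr)=Y$, i.e.\ the classical description of the narrow genus field, of which the stated equivalence is an immediate corollary. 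Two small points you should make explicit: $M=\Q(Y)$ is an admissible competitor in the definition of $G_+(K/\Q)$ used in this paper (it is of the form $KL$ with $L/\Q$ abelian, since $Y\supseteq X(K)$ makes $M$ an abelian extension of $\Q$ containing $K$), and conversely $G_+(K/\Q)=KL$ with $K,L/\Q$ abelian is itself abelian over $\Q$, so that its character group is defined and your converse computation applies to it; also note that only the finite primes enter, which is exactly what the narrow genus field requires.
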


As in the paper \cite{Uchida}
we say that $K$ is a field of type I
if $K$ is an imaginary abelian field with the conditions
of Theorem~\ref{FroehlichEquiv}.
For a field $K$ of type I
we say that $K=K_1\cdots K_r$
is the decomposition of $K$
into prime power conductor fields,
or simply the decomposition of $K$
where $K_i$ are subfields of $K$
of prime power conductors $p_i^{e_i}$
with distinct prime numbers $p_i$
and positive integers $e_i$.

As a corollary of Theorem~\ref{FroehlichOddCri} we have

\begin{corollary}
\label{nonIRR}
Let $K$ be a field of type {\rm I}
with decomposition $K=K_1K_2K_3$.
If $K_1$ is an imaginary quadratic field,
and $K_2$ and $K_3$ are real quadratic fields,
then $2\mid h(K)$.
\end{corollary}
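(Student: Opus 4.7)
The plan is to apply Theorem~\ref{FroehlichOddCri}(2) and conclude that $h_+(K)$ is even. Because $K_1 \subseteq K$ is imaginary quadratic and $K/\Q$ is abelian, $K$ is totally imaginary, so $h(K) = h_+(K)$. Since $K$ is of type I we have $K = G_+(K/\Q)$, so the criterion applies. Cases (2.1) and (2.2) are excluded by $n=3$, and case (2.3)(i) is excluded because the decomposition $K = K_1K_2K_3$ involves three distinct prime-power conductors, forcing the ramification set $S$ to have cardinality three. It therefore suffices to prove $\det M = 0$ in case (2.3)(ii).

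I first record which quadratic fields have prime-power conductor: for odd $p$, only $\Q(\sqrt{p^*})$ with $p^* = (-1)^{(p-1)/2}p$; for $p=2$, the fields $\Q(\sqrt{-1}),\Q(\sqrt{2}),\Q(\sqrt{-2})$. Reality of $K_2$ and $K_3$, combined with $p_1<p_2<p_3$ (so $2$, if ramified, equals $p_1$), forces $p_2\equiv p_3\equiv 1\pmod 4$ with $K_j=\Q(\sqrt{p_j})$. For $K_1$ imaginary the only possibilities are $K_1=\Q(\sqrt{-p_1})$ with $p_1\equiv 3\pmod 4$, or $p_1=2$ with $K_1\in\{\Q(\sqrt{-1}),\Q(\sqrt{-2})\}$. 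Expanding $\det M$ along the first row gives
\[
\det M = [p_2,p_1]\,[p_3,p_2]\,a_{p_1}(p_3) + [p_3,p_1]\,[p_2,p_3]\,a_{p_1}(p_2).
\]
Lemma~\ref{relationtoLegendre} together with quadratic reciprocity applied to $p_2,p_3\equiv 1\pmod 4$ gives $[p_2,p_3]=[p_3,p_2]$, so this factors as $[p_3,p_2]\bigl([p_2,p_1]\,a_{p_1}(p_3)+[p_3,p_1]\,a_{p_1}(p_2)\bigr)$.

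I then handle the three subcases. If $K_1=\Q(\sqrt{-1})$, then $a_{p_1}(p_j)=[-1,p_j]$, which by Lemma~\ref{relationtoLegendre} records $\legendre{-1}{p_j}=1$, so $a_{p_1}(p_j)=0$. If $K_1=\Q(\sqrt{-2})$, then $a_{p_1}(p_j)=[2,p_j]+[-1,p_j]=[2,p_j]$, and Lemma~\ref{relationtoLegendre} yields $[p_j,2]=[2,p_j]$ (both encoding $\legendre{2}{p_j}$), so the parenthesized sum becomes $[2,p_2][2,p_3]+[2,p_3][2,p_2]=0$ in $\F_2$. Finally, if $p_1\equiv 3\pmod 4$ is odd, then $a_{p_1}(p_j)=[p_1,p_j]$, and reciprocity applied with the factor $p_j\equiv 1\pmod 4$ gives $[p_j,p_1]=[p_1,p_j]$, so the sum collapses to $[p_1,p_2][p_1,p_3]+[p_1,p_3][p_1,p_2]=0$. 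In every case $\det M=0$, so $h_+(K)$ is even and $2\mid h(K)$. The only real obstacle is organizing the three flavors of $K_1$ and matching each with the appropriate symmetry $[x,y]=[y,x]$; no ingredient beyond Lemma~\ref{relationtoLegendre}, quadratic reciprocity, and the classification of prime-power-conductor quadratic subfields is needed.
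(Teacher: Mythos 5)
Your overall strategy is the paper's: show $K$ is totally imaginary and equal to its narrow genus field, rule out (2.1), (2.2), (2.3)(i) of Theorem~\ref{FroehlichOddCri}, and then prove $\det M=0$ in case (2.3)(ii); your expansion of $\det M$ and the three subcases you treat are handled correctly and match the paper's Cases 1, 3 and 4. However, there is a genuine gap in your case analysis: you assert that reality of $K_2,K_3$ ``forces $p_2\equiv p_3\equiv 1\pmod 4$,'' i.e.\ that the imaginary prime-power-conductor subfield $K_1$ must sit at the \emph{smallest} ramified prime $p_1$. That is not forced. Two families of admissible fields escape your enumeration: (a) all three primes odd with the imaginary prime not the smallest, e.g.\ $K=\Q(\sqrt{5},\sqrt{13},\sqrt{-7})$, where $p_1=5\equiv 1\pmod 4$ and $p_2=7\equiv 3\pmod 4$, a configuration your subcases ($p_1\equiv 3\pmod 4$ odd, or $p_1=2$) never reach; and (b) $p_1=2$ with $\sqrt{2}\in K$ real and the imaginary field $\Q(\sqrt{-q})$, $q\equiv 3\pmod 4$, e.g.\ $K=\Q(\sqrt{2},\sqrt{5},\sqrt{-3})$ — this is exactly the paper's Case 2, which your list ``$p_1=2$ with $K_1\in\{\Q(\sqrt{-1}),\Q(\sqrt{-2})\}$'' omits, since there $a_{p_1}(p)=[2,p]$ by the first clause of the definition ($\sqrt 2\in K$), not one of the two clauses you consider.

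The omission is repairable in the same spirit, which is why the paper's four ``essential cases'' suffice: in configuration (a) one still has $a_{p_1}(p_j)=[p_1,p_j]=[p_j,p_1]$ because now $p_1\equiv 1\pmod 4$, and $[p_2,p_3]=[p_3,p_2]$ because at most one of $p_2,p_3$ is $\equiv 3\pmod 4$; in configuration (b) one uses $[2,p]=[p,2]$ (both encode $\legendre{2}{p}$ by Lemma~\ref{relationtoLegendre}) together with $[p_2,p_3]=[p_3,p_2]$. In every case the matrix again satisfies $a_{p_1}(p_j)=[p_j,p_1]$ and $[p_2,p_3]=[p_3,p_2]$, so your factorization kills the determinant. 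But as written, your classification claim is false and the proof does not cover all fields satisfying the hypotheses, so you must add these configurations (or argue the needed symmetries $a_{p_1}(p_j)=[p_j,p_1]$, $[p_2,p_3]=[p_3,p_2]$ uniformly, without assuming the imaginary field lies over $p_1$).
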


\begin{proof}
We consider the following four essential cases.
\newline
Case 1:
$K=\Q(\sqrt{-p_1},\sqrt{p_2},\sqrt{p_3})$
with $p_1\equiv 3\pmod{4}$ and $p_2\equiv p_3\equiv 1\pmod{4}$.
It follows from $p_2\equiv 1\pmod{4}$ that
$a_{p_1}(p_2)=[p_1,p_2]=[p_2,p_1]$ and $[p_2,p_3]=[p_3,p_2]$.
 $a_{p_1}(p_3)=[p_1,p_3]=[p_3,p_1]$ since $p_3\equiv 1\pmod{4}$.
Thus, if $[p_2,p_1],[p_3,p_2]$ or $[p_3,p_1]$
is equal to $0$,
then the first, second or third column of $M$ is the zero vector,
which implies $\det M=0$.
When $[p_2,p_1]=[p_3,p_2]=[p_3,p_1]=1$,
the sum of the three columns of $M$ is equal to the zero vector,
which yields $\det M=0$.
\newline
Case 2:
$K=\Q(\sqrt{2},\sqrt{p_2},\sqrt{-p_3})$
with $p_1=2$, $p_2\equiv 1\pmod{4}$ and $p_3\equiv 3\pmod{4}$.
It follows from $[2,p]=[p,2]$ that
$a_{p_1}(p_2)=[2,p_2]=[p_2,p_1]$
and $a_{p_1}(p_3)=[2,p_3]=[p_3,p_1]$.
By $p_2\equiv 1\pmod{4}$
we have $[p_2,p_3]=[p_3,p_2]$.
In the same way as in case 1,
we see $\det M=0$.
\newline
Case 3:
$K=\Q(\sqrt{-1},\sqrt{p_2},\sqrt{p_3})$
with $p_1=2$ and $p_2\equiv p_3\equiv 1\pmod{4}$.
Then we have $a_{p_1}(p_2)=a_{p_1}(p_3)=0$.
Thus the first, the third column of $M$ or their sum
is equal to the zero vector, which implies $\det M=0$.
\newline
Case 4:
$K=\Q(\sqrt{-2},\sqrt{p_2},\sqrt{p_3})$
with $p_1=2$ and $p_2\equiv p_3\equiv 1\pmod{4}$.
Then we have
$a_{p_1}(p_2)=[2,p_2]+[-1,p_2]=[2,p_2]=[p_2,p_1]$,
$[p_2,p_3]=[p_3,p_2]$
and $a_{p_1}(p_3)=[2,p_3]+[-1,p_3]=[2,p_3]=[p_3,p_1]$.
In the same way as in case 1,
we see $\det M=0$.
\end{proof}

\begin{remark}
\label{UchidaLemma}
{\rm
Uchida shows Corollary~\ref{nonIRR}
in a more general case \cite[Lemma 1]{Uchida}, that is,
if $K_1$ is an imaginary cyclic field of 2-power degree,
and $K_2$ and $K_3$ are real quadratic fields,
then $2\mid h(K)$.
}
\end{remark}

Let us consider the odd part $\Clodd(K)$ of $\Cl(K)$,
that is, the subgroup of $\Cl(K)$
consisting of all the classes of odd order.
The following well-known theorem is useful
to study $\Clodd(K)$ for an $n$-quadratic field $K$.

\begin{theorem}[Lemmermeyer {\cite[(1.1)]{Lemmer}}]
\label{LemmerOdd}
Let $K/k$ a Galois extension of number fields with $\Gal(K/k)\simeq V_4$
where $V_4$ is Klein's four-group.
Let $k_1,k_2$ and $k_3$ stand for
the three intermediate fields of $K/k$
with $[K:k_j]=[k_j:k]=2$.
Then we have
\[\Clodd(K)\simeq \Clodd(k)\times\prod_{j=1}^3
\Clodd(k_j)/\Clodd(k).\]
\end{theorem}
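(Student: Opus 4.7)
The plan is to exploit that $|G|=4$, where $G=\Gal(K/k)\simeq V_4$, is a $2$-power and therefore acts invertibly on $\Clodd(K)$. The group algebra $\Z[\tfrac12][G]$ is semisimple and splits into four orthogonal idempotents $e_0,e_1,e_2,e_3$ attached to the four characters of $V_4$, so that
\[ \Clodd(K) = e_0\Clodd(K) \oplus e_1\Clodd(K) \oplus e_2\Clodd(K) \oplus e_3\Clodd(K), \]
where $e_0=\tfrac14\sum_{g\in G}g$ and, for $j=1,2,3$, $e_j$ is the idempotent associated to the non-trivial character with kernel $H_j=\Gal(K/k_j)$. Writing $H_j=\{1,\sigma_j\}$, one computes $e_0+e_j=\tfrac12(1+\sigma_j)$.

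Next I would identify each eigenspace with a familiar class group. For any intermediate field $F$ with $k\subseteq F\subseteq K$, the extension-of-ideals map $\iota_{F\to K}\colon \Cl(F)\to \Cl(K)$ and the relative norm $N_{K/F}\colon \Cl(K)\to\Cl(F)$ satisfy
\[ N_{K/F}\circ\iota_{F\to K}=[K:F] \quad\text{and}\quad \iota_{F\to K}\circ N_{K/F}=\sum_{g\in\Gal(K/F)}g. \]
Since $[K:F]$ is a power of $2$, both maps are isomorphisms onto their images on the odd parts; in particular $\iota_{F\to K}$ is split injective and $N_{K/F}$ is surjective on $\Clodd$. Taking $F=k$ gives $4e_0=\iota_{k\to K}\circ N_{K/k}$, hence $e_0\Clodd(K)=\iota_{k\to K}(\Clodd(k))\cong\Clodd(k)$. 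Taking $F=k_j$ gives $2(e_0+e_j)=\iota_{k_j\to K}\circ N_{K/k_j}$, hence $(e_0+e_j)\Clodd(K)\cong\Clodd(k_j)$, compatibly with the inclusion of $e_0\Clodd(K)$ inside it.

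Subtracting, the summand $e_j\Clodd(K)$ realises the cokernel $\Clodd(k_j)/\Clodd(k)$, and assembling the four isotypical components yields the claimed direct product decomposition.

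The main technical point I expect to require care is the precise identification that $e_0\Clodd(K)$ equals $\iota_{k\to K}(\Clodd(k))$, and similarly that $(e_0+e_j)\Clodd(K)$ equals $\iota_{k_j\to K}(\Clodd(k_j))$, so that the quotient interpretation is canonical and compatible with the direct sum. This reduces to using invertibility of $2$ on the odd part in the two formulae $4e_0=\iota_{k\to K}\circ N_{K/k}$ and $2(e_0+e_j)=\iota_{k_j\to K}\circ N_{K/k_j}$. Beyond that, the argument is a purely formal manipulation with idempotents in a semisimple group algebra.
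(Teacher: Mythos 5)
The paper does not prove this statement itself; it is quoted from Lemmermeyer, so there is no internal proof to compare against. Your argument is correct and is in fact the standard proof underlying Lemmermeyer's (1.1): since $\Clodd(K)$ is a module over $\Z[\tfrac12][V_4]$, which is split semisimple, the four character idempotents decompose it, and the identities $N_{K/F}\circ\iota_{F\to K}=[K:F]$ and $\iota_{F\to K}\circ N_{K/F}=\sum_{g\in\Gal(K/F)}g$ identify $e_0\Clodd(K)$ with $\iota_{k\to K}(\Clodd(k))$ and $(e_0+e_j)\Clodd(K)$ with $\iota_{k_j\to K}(\Clodd(k_j))$, exactly as you indicate. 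Two small points worth making explicit when writing this up: the norm and extension maps preserve odd order, so the image computations really take place inside $\Clodd$; and the transitivity $\iota_{k\to K}=\iota_{k_j\to K}\circ\iota_{k\to k_j}$ is what guarantees that the subgroup of $(e_0+e_j)\Clodd(K)$ corresponding to $\Clodd(k)$ is precisely $e_0\Clodd(K)$, so that $e_j\Clodd(K)\simeq\Clodd(k_j)/\Clodd(k)$ with the quotient taken by the natural image of $\Clodd(k)$ in $\Clodd(k_j)$ (injective since $[k_j:k]=2$ is prime to the odd part). With those remarks, the sketch closes up into a complete proof.
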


We define $p^*:=(-1)^{(p-1)/2}p$ for odd prime numbers $p$. 
Let $P^*=\{8,-4,-8\}\cup\{p^*\,\vert\,p \text{ an odd prime number}\}$,
$P^*_+=\{p^*\in P^*\,\mid\,p^*>0\}$
and
$P^*_-=\{p^*\in P^*\,\mid\,p^*<0\}$.
Let $E(K)$ represent the exponent of $\Cl(K)$ for a number field $K$.

\begin{theorem}
\label{allforms}
Fix an odd number $u>0$.
Let $K$ be an imaginary $n$-quadratic field
such that $E(K)\mid u$.
Then $K$ is of one of the following forms 
(1), (2) and (3).
\begin{itemize}
\item[(1)]
$K=\Q(\sqrt{p^*})$ 
with $p^*\in P^*_-$
such that $E(\Q(\sqrt{p^*}))\mid u$.
\item[(2)]
$K=\Q(\sqrt{p_1^*},\sqrt{p_2^*})$
with $p_1^*\in P^*_-$ and $p_2^*\in P^*$
such that $\Q(\sqrt{p_1^*})$ is of the form (1) above
and the following (2a) or (2b) holds:
\begin{itemize}
\item[(2a)]
$p_2^*<0$ and 
$E(\Q(\sqrt{p_2^*}))\mid u$, i.e.,
$\Q(\sqrt{p_2^*})$ is also of the form (1),
\item[(2b)]
$p_2^*>0$ and
$E(\Q(\sqrt{p_1^*p_2^*}))\mid 2u$.
\end{itemize}
\item[(3)]
$K=\Q(\sqrt{p_1^*},\sqrt{p_2^*},\sqrt{p_3^*})$
with $p_1^*,p_2^*\in P^*_-$, $p_3^*\in P^*$
such that the
three subfields
$\Q(\sqrt{p_1^*},\sqrt{p_2^*}),
\Q(\sqrt{p_1^*},\sqrt{p_3^*})$
and
$\Q(\sqrt{p_2^*},\sqrt{p_3^*})$
are of the form (2).
\end{itemize}
\end{theorem}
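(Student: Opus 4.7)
The plan is to convert the exponent condition $E(K)\mid u$ into a statement about odd class numbers (using that $u$ is odd), apply Fr\"ohlich's theorems to identify $K$ as a compositum of quadratic subfields $\Q(\sqrt{p^*})$ with $p^*\in P^*$, and then transfer exponent information from $K$ down to its subfields via the standard index-$2$ injection on odd class groups. First I would observe that $E(K)\mid u$ with $u$ odd forces every class in $\Cl(K)$ to have odd order, so $h(K)$ itself is odd; Theorem~\ref{FroehlichOddCri}(1) then yields $K=G_+(K/\Q)$, and Theorem~\ref{FroehlichEquiv} writes $K$ as a compositum of prime-power conductor subfields. Since the only $2$-elementary abelian subfield of $\Q(\zeta_{p^e})$ for odd $p$ is $\Q(\sqrt{p^*})$ (and for $p=2$ every such subfield lies in $\Q(\zeta_8)$), $K$ is generated by quadratic subfields of the form $\Q(\sqrt{p^*})$ with $p^*\in P^*$. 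Remark~\ref{greaterthanthree} then forces $n\leq 3$.

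The key auxiliary fact I would use is: for any subfield $L\subseteq K$ with $[K:L]$ a power of $2$, the kernel of the extension map $\Cl(L)\to\Cl(K)$ is killed by $[K:L]$, so $\Clodd(L)\hookrightarrow\Clodd(K)$ and hence $E(\Clodd(L))\mid u$. For $L=\Q(\sqrt{p^*})$ with $p^*\in P^*_-$, genus theory (single-prime discriminant) gives $h(L)$ odd, so $\Cl(L)=\Clodd(L)$ and $E(L)\mid u$; this immediately produces form (1) when $n=1$ and handles every imaginary prime-power generator occurring in (2a), (2b) and (3). For case (2b), the compound imaginary quadratic $L=\Q(\sqrt{p_1^*p_2^*})\subset K$ has $[K:L]=2$; since $h(K)$ is odd, the image of the extension map has odd order, so $\Cl_2(L)$ lies inside the exponent-$2$ kernel and thus has exponent $\leq 2$, giving $E(L)\mid 2u$.

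For $n=3$, Corollary~\ref{nonIRR} rules out the case where only one $p_i^*$ is negative (which would force $2\mid h(K)$), so after relabeling $p_1^*,p_2^*\in P^*_-$. It remains to verify that each pair-biquadratic subfield $k_{ij}=\Q(\sqrt{p_i^*},\sqrt{p_j^*})$ is of form (2), and by the previous paragraph this reduces to showing $h(k_{ij})$ is odd. I expect this to be the main obstacle: one must transfer the triquadratic criterion $\det M\neq 0$ of Theorem~\ref{FroehlichOddCri}(2.3)(ii) into the biquadratic criterion (2.2)(ii) for each of the three pairs. The plan is that the Fr\"ohlich symbols $([p_j,p_i],\,a_{p_i}(p_j))$ that decide whether $h(k_{ij})$ is odd appear together as the two structurally non-zero entries of a single column of $M$; if $h(k_{ij})$ were even, both would vanish, forcing $\det M=0$ and contradicting the oddness of $h(K)$. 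The sub-case $\Q(\zeta_8)\subset K$ falls under Theorem~\ref{FroehlichOddCri}(2.3)(i) and is handled in parallel using the hypothesis $[p_2,2]\neq 0$, which similarly implies odd class number for each of the relevant pair-biquadratic subfields.
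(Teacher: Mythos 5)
Your proposal is correct, and while its skeleton (Fr\"ohlich's Theorems \ref{FroehlichOddCri} and \ref{FroehlichEquiv} to get the $P^*$-decomposition, Corollary \ref{nonIRR} to force two negative discriminants when $n=3$) matches the paper, the mechanism by which you push conditions down to subfields is genuinely different. The paper uses surjectivity of the norm map $\Cl(K)\to\Cl(L)$ for extensions ramified at a finite prime (Washington, Theorem 10.1) to get $E(L)\mid E(K)$ for the quadratic and biquadratic subfields, and for (2b) it combines Lemmermeyer's formula with a class-field-theoretic contradiction: an unramified $C_4$-extension of $\Q(\sqrt{p_1^*p_2^*})$ would lift to a nontrivial unramified $2$-extension of $K$. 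You instead use the ideal-extension map $\Cl(L)\to\Cl(K)$, whose kernel is killed by $[K:L]$: this injects $\Clodd(L)$ into $\Clodd(K)$, and when the ambient class number is odd it traps the $2$-part of $\Cl(L)$ in a kernel of exponent $2$, giving $E\mid 2u$ (indeed an elementary $2$-part) in one stroke. For $n=3$ the paper is shorter: norm surjectivity gives $E(k_{ij})\mid u$ for the three imaginary biquadratic subfields and then the already-proved case $n=2$ applies; you verify form (2) for each $k_{ij}$ directly, which obliges you to prove $h(k_{ij})$ odd by the column-by-column transfer of the determinant criterion. That transfer does work as you predict: since $p_2$ is odd one has $a_{p_2}(p_3)=[p_2,p_3]$, and the $a_{p_1}$-symbols computed in $K$ and in $k_{1j}$ agree because both fields contain the same quadratic subfield of $\Q(\zeta_8)$, so each column of $M$ consists precisely of the two symbols in the biquadratic criterion (2.2)(ii) plus a structural zero; together with your (2.3)(i) treatment of the $\Q(\zeta_8)\subset K$ case and the trivial case $K=\Q(\zeta_8)$ at $n=2$, nothing is missing. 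Note, however, that the oddness of $h(k_{ij})$ could be had more cheaply (by the paper's norm surjectivity, or by lifting an unramified quadratic extension of $k_{ij}$ to $K$), so the Fr\"ohlich-matrix bookkeeping is avoidable; what your route buys in exchange is independence from the norm-surjectivity theorem and a slightly sharper statement about the $2$-part of the real quadratic subfield's class group.
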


\begin{proof}
If $n=1$, then the case (1) is obvious from genus theory.
Assume $n=2$.
Theorems~\ref{FroehlichOddCri} and \ref{FroehlichEquiv}
imply that $K=\Q(\sqrt{p_1^*},\sqrt{p_2^*})$
for some $p_1^*\in P^*_-$ and $p_2^*\in P^*$.
When $p_1=p_2$, we have $p_1=p_2=2$,
that is, $K=\Q(\sqrt{-4},\sqrt{-8})$,
for which (2) holds.
Under $p_1\not=p_2$,
since the extension
$K/\Q(\sqrt{p_1^*})$
is ramified at $p_2$,
the norm map $\Cl(K)
\to \Cl(\Q(\sqrt{p_1^*}))$ is surjective
(cf.\ \cite[Theorem 10.1]{Washington}).
This means that 
$E(\Q(\sqrt{p_1^*}))\mid E(K)$,
which implies that $\Q(\sqrt{p_1^*})$ is of the form (1).
In the same way, if $p_2^*<0$, then 
$\Q(\sqrt{p_2^*})$ is of the form (1).
When $p_2^*>0$,
put $k=\Q(\sqrt{p_1^*p_2^*})$.
Theorem~\ref{LemmerOdd} shows that $E(k)\mid 2^ru$
for some rational integer $r\geq 0$.
It follows from genus theory that $2\mid E(k)$.
Suppose $4\mid E(k)$.
Then $k$ has an unramified extension $M_k$
with $\Gal(M_k/k)\simeq C_4$.
Note that $M_k$ is not contained in $K$
since $\Gal(K/\Q)$ has exponent $2$.
Thus the lift $M_kK/K$ of $M_k/k$ to $K$
is nontrivial, that is, $2\mid h(K)$ by class field theory.
This is a contradiction.
Thus we have $E(k)\mid 2u$.
For the case $n=3$, 
Theorems~\ref{FroehlichOddCri} and \ref{FroehlichEquiv}
imply that $K=\Q(\sqrt{p_1^*},\sqrt{p_2^*},\sqrt{p_3^*})$
for some $p_1^*\in P^*_-$ and $p_2^*,p_3^*\in P^*$.
Due to Corollary~\ref{nonIRR},
we have $p_2^*<0$ or $p_3^*<0$, say, $p_2^*\in P^*_-$.
In the same way as for $n=2$,
due to the surjectivity of the norm maps,
the imaginary subfields 
$\Q(\sqrt{p_1^*},\sqrt{p_2^*}),
\Q(\sqrt{p_1^*},\sqrt{p_3^*})$
and
$\Q(\sqrt{p_2^*},\sqrt{p_3^*})$
have class groups of exponent dividing $u$,
and thus they are of the form (2).
\end{proof}

Let $\mathcal{K}_{\rm 1},\mathcal{K}_{\rm 2}$
and $\mathcal{K}_{\rm 3}$ (depending on $u$)
denote the families of all fields 
of the forms (1), (2) and (3), respectively. 
As a corollary of Theorem~\ref{allforms} we have

\begin{corollary}
\label{KtwoimpliesKthree}
If $\mathcal{K}_{\rm 2}$ is finite for a certain choice of $u$,
then so is $\mathcal{K}_{\rm 3}$.
\end{corollary}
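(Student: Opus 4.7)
The plan is to bound $\mathcal{K}_{3}$ in terms of a finite pool of admissible primes extracted from $\mathcal{K}_{2}$. Suppose $\mathcal{K}_{2}$ is finite. By the description in Theorem~\ref{allforms}, every $L\in\mathcal{K}_{2}$ has the form $L=\Q(\sqrt{q_1^*},\sqrt{q_2^*})$ with $q_1^*\in P_-^*$ and $q_2^*\in P^*$, so the collection
\[
\mathcal{S}:=\{q^*\in P^*\mid \sqrt{q^*}\in L\text{ for some }L\in\mathcal{K}_{2}\}
\]
is a finite subset of $P^*$.

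Next, let $K\in\mathcal{K}_{3}$ be arbitrary, so $K=\Q(\sqrt{p_1^*},\sqrt{p_2^*},\sqrt{p_3^*})$ with $p_1^*,p_2^*\in P_-^*$ and $p_3^*\in P^*$, and with all three biquadratic subfields
\[
\Q(\sqrt{p_1^*},\sqrt{p_2^*}),\quad \Q(\sqrt{p_1^*},\sqrt{p_3^*}),\quad \Q(\sqrt{p_2^*},\sqrt{p_3^*})
\]
lying in $\mathcal{K}_{2}$ by the definition of $\mathcal{K}_{3}$. Consequently each of $p_1^*,p_2^*,p_3^*$ is a component of some field in $\mathcal{K}_{2}$, so $p_1^*,p_2^*,p_3^*\in\mathcal{S}$. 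The number of choices of an unordered triple from the finite set $\mathcal{S}$ is bounded by $\binom{|\mathcal{S}|}{3}$, and each such triple determines $K$ uniquely. Therefore $\mathcal{K}_{3}$ is finite.

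There is no real obstacle here; the argument is just extracting the finite prime pool and pushing it through the triquadratic case. The only thing to be slightly careful about is that the three biquadratic subfields of $K\in\mathcal{K}_{3}$ each appear among the fields in $\mathcal{K}_{2}$, which is exactly what condition (3) of Theorem~\ref{allforms} guarantees; hence every radicand $p_i^*$ appearing in $K$ is forced into the already finite set $\mathcal{S}$.
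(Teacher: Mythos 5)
Your argument is correct and is exactly the reasoning the paper leaves implicit: since every $K\in\mathcal{K}_{3}$ has all three biquadratic subfields in $\mathcal{K}_{2}$ by form (3) of Theorem~\ref{allforms}, the radicands $p_1^*,p_2^*,p_3^*$ are confined to the finite set of discriminants occurring in the finitely many fields of $\mathcal{K}_{2}$, so only finitely many triples, hence finitely many fields $K$, can arise. This matches the paper's (unwritten) proof, which treats the corollary as an immediate consequence of Theorem~\ref{allforms}.
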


\begin{remark}
\label{relationtoUchida}
{\rm
Our inductive method in Theorem~\ref{allforms}
with $u=1$ coincides
with that in the proof of Proposition 5 in \cite{Uchida}.
}
\end{remark}

\section{Finiteness of the families under ERH and ideas for the computation}

\begin{theorem}[Boyd and Kisilevsky {\cite[Theorem 4]{BoydKisi}}]
\label{BoydKisiERH}
Let $k$ be an imaginary quadratic field of discriminant $D<0$.
Then, under the extended Riemann hypothesis (ERH),
for any $\eta>0$,
we have $E(k)>(\log|D|)/((2+\eta)\log\log|D|)$
for sufficiently large $|D|$.
\end{theorem}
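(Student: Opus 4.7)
The plan is to combine two ingredients: an effective analytic bound under ERH on the smallest rational prime that splits in $k$, and an elementary norm computation showing that every split prime $p$ in $k$ already forces $p^{E(k)}\ge |D|/4$.

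For the analytic input, I would apply ERH to the Dirichlet $L$-function associated with the Kronecker character of $k$ and invoke the standard explicit-formula argument (of Ankeny type) to produce a rational prime $p$ that splits in $k$ with $p\le c(\log|D|)^2$ for an absolute constant $c$; such a $p$ exists and is unramified in $k$ once $|D|$ is large.

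For the elementary step, let $\mathfrak{p}\mid p$ be a prime of the ring of integers of $k$, and let $d$ be the order of $[\mathfrak{p}]$ in $\Cl(k)$, so $d\mid E(k)$ and $\mathfrak{p}^d=(\alpha)$ for some algebraic integer $\alpha$ of norm $N(\alpha)=p^d$. Since $p$ splits, $\mathfrak{p}\ne\bar{\mathfrak{p}}$ as ideals; hence $(\alpha)$ is not stable under complex conjugation, so $\alpha\notin\Z$. Writing $2\alpha=a+b\sqrt{D}$ with $a,b\in\Z$ and $b\ne 0$ yields
\[
4p^d \;=\; a^2+|D|b^2 \;\ge\; |D|,
\]
i.e.\ $p^d\ge |D|/4$. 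Combining this with the upper bound on $p$ gives
\[
E(k) \;\ge\; d \;\ge\; \frac{\log(|D|/4)}{\log p} \;\ge\; \frac{\log|D|+O(1)}{2\log\log|D|+O(1)},
\]
which for every fixed $\eta>0$ exceeds $\log|D|/((2+\eta)\log\log|D|)$ once $|D|$ is large enough.

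The only nontrivial input is the analytic bound $p\ll(\log|D|)^2$ for the least split prime; the rest is routine manipulation with the principal norm form. The main obstacle is therefore extracting the right constant ``$2$'' in front of $\log\log|D|$ from the effective-Chebotarev / explicit-formula argument, which is exactly what Bach-type explicit versions of ERH provide.
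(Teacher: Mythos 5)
Your argument is correct and is essentially the original Boyd--Kisilevsky proof: the paper itself cites this theorem without proof, but the two ingredients you use (the norm bound $N_{k/\Q}(\alpha)\geq |D|/4$ for $\alpha\notin\Z$, and an ERH bound of size $(\log|D|)^2$ for the least split prime) are exactly Lemma~\ref{BoydKisi} and the Bach--Sorenson bound of Theorem~\ref{BachSore}, which the paper reuses in the same way in the proof of Lemma~\ref{RpBound}. Your combination $E(k)\geq d\geq \log(|D|/4)/\log p$ with $p\ll(\log|D|)^2$ yields the stated constant $2+\eta$ precisely as in the cited source, so there is nothing to correct.
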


\begin{corollary}
\label{finiteOneTwo}
Under ERH,
the families $\mathcal{K}_{\rm 1}$ and
$\mathcal{K}_{\rm 2}$ are finite for any odd $u$.
\end{corollary}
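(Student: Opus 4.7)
The proof proposal: both statements will follow from Theorem~\ref{BoydKisiERH} together with the structural description of $\mathcal{K}_2$ given in Theorem~\ref{allforms}.

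First, I would dispose of $\mathcal{K}_1$. If $K=\Q(\sqrt{p^*})\in\mathcal{K}_{\rm 1}$, then by definition $E(K)\mid u$, and in particular $E(K)\le u$. The discriminant of $K$ is (up to a factor of at most $4$) comparable to $|p^*|$, so Theorem~\ref{BoydKisiERH} applied with any $\eta>0$ forces
\[
u\;\ge\;E(K)\;>\;\frac{\log|D_K|}{(2+\eta)\log\log|D_K|}
\]
once $|D_K|$ is large enough. Since the right-hand side tends to infinity with $|D_K|$, only finitely many $p^*\in P^*_-$ can satisfy this, so $\mathcal{K}_{\rm 1}$ is finite.

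Next, for $\mathcal{K}_{\rm 2}$ I would use the built-in structure of the definition. A field $K=\Q(\sqrt{p_1^*},\sqrt{p_2^*})\in\mathcal{K}_{\rm 2}$ has $\Q(\sqrt{p_1^*})\in\mathcal{K}_{\rm 1}$ by clause~(2), so the finiteness of $\mathcal{K}_{\rm 1}$ already restricts $p_1^*$ to finitely many values. I then fix such a $p_1^*$ and bound $p_2^*$ according to which subcase of (2) occurs. In case (2a), $\Q(\sqrt{p_2^*})$ itself lies in $\mathcal{K}_{\rm 1}$, and the first paragraph again limits $p_2^*$ to finitely many choices. In case (2b), $p_1^*<0$ and $p_2^*>0$, so $\Q(\sqrt{p_1^*p_2^*})$ is imaginary quadratic, and its class group has exponent dividing $2u$. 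Applying Theorem~\ref{BoydKisiERH} to this field bounds its discriminant in terms of $2u$; since $p_1^*$ is already fixed, this bounds $|p_2^*|$ and leaves only finitely many possibilities.

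The hard part is essentially nonexistent: once Theorem~\ref{allforms} has organised every $K\in\mathcal{K}_{\rm 2}$ so that an imaginary quadratic subfield of bounded exponent always exists (namely $\Q(\sqrt{p_1^*})$, and then either $\Q(\sqrt{p_2^*})$ or $\Q(\sqrt{p_1^*p_2^*})$), the conditional bound of Boyd–Kisilevsky does all the remaining work, with the bound on the exponent being $u$ in cases (1) and (2a) and $2u$ in case (2b). The only mild subtlety to flag is that in case~(2b) the relevant imaginary quadratic field is $\Q(\sqrt{p_1^*p_2^*})$ rather than $\Q(\sqrt{p_2^*})$ (which is real), and that the replacement of $u$ by $2u$ is harmless because Theorem~\ref{BoydKisiERH} gives finiteness for any fixed upper bound on the exponent.
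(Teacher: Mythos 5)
Your proposal is correct and follows essentially the same route as the paper: finiteness of $\mathcal{K}_{\rm 1}$ directly from Theorem~\ref{BoydKisiERH}, and finiteness of $\mathcal{K}_{\rm 2}$ by noting that both $\Q(\sqrt{p_1^*})$ and the second imaginary quadratic field ($\Q(\sqrt{p_2^*})$ in case (2a), $\Q(\sqrt{p_1^*p_2^*})$ in case (2b)) lie in the finite family of imaginary quadratic fields with exponent dividing $2u$, which pins down the pair $(p_1^*,p_2^*)$ to finitely many choices. The only cosmetic difference is that the paper packages your two subcases into one statement via the exponent $c\in\{0,1\}$ and the single auxiliary family $\mathcal{F}$.
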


\begin{proof}
It follows directly from Theorem~\ref{BoydKisiERH}
that $\mathcal{K}_{\rm 1}$ is finite.
Assume $K\in\mathcal{K}_{\rm 2}$,
that is,
$K=\Q(\sqrt{p_1^*},\sqrt{p_2^*})$
where $p_1^*\in P^*_-$ and $p_2^*\in P^*$
satisfy
$E(\Q(\sqrt{p_1^*}))\mid u$
and
$E(\Q(\sqrt{p_1^{*c}p_2^*}))\mid 2^cu$
where $c=0$ if $p_2^*<0$ and $c=1$ otherwise.
Let $\mathcal{F}$ denote
the family consisting of all
imaginary quadratic fields $k$ with $E(k)\mid 2u$.
Theorem~\ref{BoydKisiERH} yields that
$\mathcal{F}$ is finite under ERH.
Since $\Q(\sqrt{p_1^*})$
and $\Q(\sqrt{p_1^{*c}p_2^*})$ belong to 
the finite family $\mathcal{F}$,
the set of such pairs $(p_1^*,p_2^*)$ is also finite.
Hence $\mathcal{K}_{\rm 2}$ is finite.
\end{proof}

\begin{corollary}
\label{finiteMulti}
For a fixed odd number $u>0$,
under ERH,
there exist finitely many imaginary $n$-quadratic fields $K$
such that $E(K)\mid u$.
\end{corollary}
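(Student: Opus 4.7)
The plan is to assemble Corollary \ref{finiteMulti} directly from the structural results already established, with the only real content being to notice that every relevant $n$ is covered. Let me fix an odd $u>0$ and an imaginary $n$-quadratic field $K$ with $E(K)\mid u$; I need to bound the number of such $K$ across all $n$ simultaneously.

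First I would dispose of large $n$. Because $u$ is odd, the hypothesis $E(K)\mid u$ forces every element of $\Cl(K)$ to have odd order, hence $h(K)=h_+(K)$ is odd (using that $K$ is totally imaginary, so narrow and ordinary class groups coincide). By Remark~\ref{greaterthanthree}, which quotes Fr\"ohlich's Theorem 5.2, no imaginary $n$-quadratic field with odd class number exists once $n\geq 4$. Thus the only possibilities are $n\in\{1,2,3\}$, and correspondingly $K$ lies in one of the three families $\mathcal{K}_1,\mathcal{K}_2,\mathcal{K}_3$ described after Theorem~\ref{allforms}.

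Next I would invoke the finiteness results from this section. Corollary~\ref{finiteOneTwo} shows, under ERH, that both $\mathcal{K}_1$ and $\mathcal{K}_2$ are finite for the given $u$. Then Corollary~\ref{KtwoimpliesKthree} upgrades finiteness of $\mathcal{K}_2$ to finiteness of $\mathcal{K}_3$ (since a triquadratic field of the form~(3) is built from three biquadratic subfields of the form~(2), and there are only finitely many choices for the underlying primes $p_1^*,p_2^*,p_3^*$). Hence $\mathcal{K}_1\cup\mathcal{K}_2\cup\mathcal{K}_3$ is a finite set under ERH, and every $K$ in question belongs to it.

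There is essentially no obstacle: the substantive ingredients, namely the classification in Theorem~\ref{allforms}, Fr\"ohlich's criterion invoked in Remark~\ref{greaterthanthree}, and the Boyd--Kisilevsky bound used in Corollary~\ref{finiteOneTwo}, have already been set up. The only care needed is the opening observation that $u$ odd and $E(K)\mid u$ together imply odd $h(K)$, so that Fr\"ohlich's ruling out of $n\geq 4$ actually applies; everything else is assembly.
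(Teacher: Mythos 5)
Your proof is correct and follows essentially the route the paper intends: the paper leaves this corollary without an explicit proof precisely because it is the assembly of Remark~\ref{greaterthanthree} (ruling out $n\geq 4$ once $E(K)\mid u$ forces $h(K)$ odd), Theorem~\ref{allforms}, Corollary~\ref{finiteOneTwo}, and Corollary~\ref{KtwoimpliesKthree} that you carry out. Your explicit observation that odd exponent implies odd class number, so that Fr\"ohlich's criterion applies, is the right (and only) point needing care.
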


Corollaries~\ref{finiteOneTwo} and \ref{finiteMulti}
with $u=3$ hold without ERH since we have

\begin{theorem}[Heath-Brown {\cite[Theorems 1 and 2]{HB}}]
\label{HBthm}
If $E$ is equal to $5$, $2^m$ or $2^m3$ with a rational integer $m\geq 0$,
then there exists an ineffective constant $d_E$ such that
$E(\Q(\sqrt{-d}))\not=E$
for every fundamental discriminant $-d$ with $d>d_E$.
\end{theorem}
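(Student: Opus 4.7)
The plan is to derive a contradiction between Siegel's ineffective lower bound on $h(\Q(\sqrt{-d}))$ and a structural upper bound forced by the exponent hypothesis, which will close up for every $d$ beyond some ineffective threshold. First I would decompose the class group into $\ell$-primary parts $\Cl(\Q(\sqrt{-d})) = \bigoplus_{\ell} \Cl_\ell$. The hypothesis $E(\Q(\sqrt{-d})) = E$ forces $\Cl_\ell = 0$ for $\ell \nmid E$ and $\exp(\Cl_\ell) \leq \ell^{v_\ell(E)}$ for $\ell \mid E$; since a finite abelian $\ell$-group of exponent $\ell^a$ and $\ell$-rank $r$ has order at most $\ell^{ar}$, this yields
\[ h(\Q(\sqrt{-d})) \leq \prod_{\ell \mid E} \ell^{v_\ell(E)\cdot r_\ell(d)}, \]
where $r_\ell(d) := \dim_{\F_\ell}(\Cl(\Q(\sqrt{-d}))/\ell\, \Cl(\Q(\sqrt{-d})))$.

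Next I would handle the $2$-part using Gauss's genus theory, which gives $r_2(d) \leq \omega(d) - 1$, where $\omega(d)$ is the number of distinct prime divisors of $d$. Combined with the elementary estimate $\omega(d) = O(\log d / \log\log d)$, this produces $2^{m\,r_2(d)} \leq d^{O(m/\log\log d)} = o(d^{\epsilon})$ for any fixed $\epsilon > 0$. For $E = 2^m$ this already suffices: $h(\Q(\sqrt{-d})) = o(d^{\epsilon})$ contradicts Siegel's bound $h(\Q(\sqrt{-d})) \gg_\epsilon d^{1/2-\epsilon}$ as soon as $d$ exceeds an ineffective threshold depending only on $m$.

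For the remaining cases $E = 5$ and $E = 2^m \cdot 3$ the same genus-theoretic estimate still makes the $2$-part contribution $d^{o(1)}$, so it suffices to bound the $3$- or $5$-part. Here I would invoke pointwise $\ell$-rank estimates for $\ell \in \{3,5\}$ that guarantee $\ell^{r_\ell(d)} = o(d^{1/2-\epsilon})$; together with the $2$-part bound they give $h(\Q(\sqrt{-d})) = o(d^{1/2-\epsilon})$, again contradicting Siegel. The ineffectivity of $d_E$ is inherited entirely from Siegel's theorem. The hard part will be establishing those pointwise $\ell$-rank bounds: the trivial estimate $r_\ell(d) \leq \log h / \log \ell = (\tfrac12 + o(1)) \log_\ell d$ is too weak by exactly the margin we need to beat, so any saving has to come from nontrivial input about the distribution of $\ell$-torsion in class groups, which is the substantive content of \cite[Theorems 1 and 2]{HB} and is typically extracted by combining sieve techniques with character-sum or Gauss-sum estimates.
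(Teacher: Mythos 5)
First, a framing remark: the paper contains no proof of Theorem~\ref{HBthm} at all --- it is imported verbatim from Heath-Brown \cite{HB} --- so your attempt has to be measured against Heath-Brown's argument. Measured that way, your case $E=2^m$ is correct and is in fact the classical argument going back to Chowla: exponent $2^m$ forces the whole class group to be a $2$-group, genus theory bounds its rank by $\omega(d)-1=O(\log d/\log\log d)$, hence $h(\Q(\sqrt{-d}))=d^{o(1)}$, contradicting Siegel's ineffective bound $h\gg_\epsilon d^{1/2-\epsilon}$. Your case $E=2^m\cdot 3$ can also be completed along the lines you sketch, because a power-saving pointwise bound on $3$-torsion genuinely exists in the literature ($3^{r_3(d)}\ll_\epsilon d^{1/3+\epsilon}$ by Ellenberg--Venkatesh, with earlier exponents $0.44$ by Helfgott--Venkatesh and $27/56+\epsilon$ by Pierce); combined with your $2$-part estimate this gives $h\ll d^{1/3+o(1)}$ against Siegel's $d^{1/2-\epsilon}$. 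That is a legitimate proof, though a genuinely different route from \cite{HB}.

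The genuine gap is the case $E=5$, which is the title case of \cite{HB}. The input you propose to ``invoke'' --- a pointwise bound $5^{r_5(d)}=o(d^{1/2-\epsilon})$ --- does not exist: to date there is no unconditional power-saving pointwise bound for $5$-torsion of imaginary quadratic class groups. Worse, in your setting the reduction is circular: if $E(\Q(\sqrt{-d}))=5$ then $\Cl(\Q(\sqrt{-d}))\simeq C_5^{r_5(d)}$, so $h=5^{r_5(d)}$ and the bound you demand is literally the assertion ``$h=o(d^{1/2-\epsilon})$ for exponent-$5$ fields'', i.e.\ Theorem~\ref{HBthm} itself modulo Siegel. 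Your closing guess about where such bounds come from also inverts the logic of \cite{HB}: Heath-Brown never estimates the $5$-rank. His mechanism is Diophantine: if the exponent divides $E$, then every ideal $\mathfrak{a}$ of $\Q(\sqrt{-d})$ with $N(\mathfrak{a})=n$ (for instance a prime ideal above a split prime) has $\mathfrak{a}^E$ principal, producing an integer solution of the norm equation $m^2+dk^2=4n^E$; he then plays a lower bound for the number of small represented integers --- obtained from Siegel's theorem $L(1,\chi_{-d})\gg_\epsilon d^{-\epsilon}$, which is the sole source of ineffectivity --- against an elementary upper bound for the number of solutions of such equations ($E$-th powers are sparse among values of $x^2+dy^2$), forcing a contradiction once $d>d_E$. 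Note that the present paper uses exactly this mechanism in its ERH-effective form, with Theorem~\ref{BachSore} supplying a small split prime, in Lemma~\ref{RpBound} and in equations \eqref{norm2E}--\eqref{normE}; to repair your proposal for $E=5$ you would have to replace the missing rank estimate by that counting argument, since no rank-theoretic shortcut is available.
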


\begin{remark}
\label{ineffective}
{\rm
Unfortunately, the constants $d_E$ in Theorem~\ref{HBthm}
are not effective.
Therefore we need to apply ERH for the explicit computation
of finding all such quadratic fields.
}
\end{remark}
In order to explain our algorithm to compute those families for a given odd $u$ we would like to define some subfamilies of $\mathcal{K}_{\rm 2}$ and $\mathcal{K}_{\rm 3}$, respectively. Under ERH,
let $\kappa_{\rm 1},\kappa_{\rm 2}$ and $\kappa_{\rm 3}$
denote the cardinalities of
the finite families
$\mathcal{K}_{\rm 1},\mathcal{K}_{\rm 2}$ and $\mathcal{K}_{\rm 3}$.

Let $I_1$ be the set consisting of 
the discriminants fields in $\mathcal{K}_1$, i.e.
\begin{eqnarray}\label{pstar}
I_1:=&\{p^*\in P^*_-\,\mid\,E(\Q(\sqrt{p^*})
\text{ divides $u$}\}.
\end{eqnarray}
For each $p^*\in I_1$
let us define a subset $R_{p^*}$ of $P^*_+$ by
\begin{eqnarray}\label{Rpstar}
R_{p^*}=&\{q^*\in P^*_+\,\mid\,E(\Q(\sqrt{p^*q^*}))
\text{ is a divisor of $2u$ and $p\not=q$}\}.
\end{eqnarray}
By definition, we have that
$\mathcal{K}_{\rm 2} =\mathcal{K}_{\rm 2a}\cup\mathcal{K}_{\rm 2b}$
where
\begin{eqnarray}\label{eq2a}
\mathcal{K}_{\rm 2a}
=&\{\Q(\sqrt{p^*},\sqrt{q^*})\,\mid\,p^*,q^*\in I_1,|p^*|< |q^*|\},\\
\mathcal{K}_{\rm 2b}\label{eq2b}
=&\{\Q(\sqrt{p^*},\sqrt{q^*})\,\mid\,p^*\in I_1,q^*\in R_{p^*}\}.
\end{eqnarray}
Here we treat $\Q(\sqrt{-4},\sqrt{-8})$ as
a member of $\mathcal{K}_{\rm 2a}$, not of $\mathcal{K}_{\rm 2b}$.
Let $\kappa_{\rm 2a}$ and $\kappa_{\rm 2b}$ denote
the cardinalities of
$\mathcal{K}_{\rm 2a}$ and $\mathcal{K}_{\rm 2b}$,
respectively.
Let $\mathcal{K}_{\rm 3a}$
and 
$\mathcal{K}_{\rm 3b}$
denote the subfamilies of $\mathcal{K}_{\rm 3}$ with
$\mathcal{K}_{\rm 3}=\mathcal{K}_{\rm 3a} \cup \mathcal{K}_{\rm 3b}$
such that
\begin{eqnarray}
\mathcal{K}_{\rm 3a}\label{eq3a}
=&\{\Q(\sqrt{p_1^*},\sqrt{p_2^*},\sqrt{p_3^*})\in\mathcal{K}_{\rm 3}
\,\mid\,p_1^*,p_2^*,p_3^*\in P^*_-\},\\
\mathcal{K}_{\rm 3b}\label{eq3b}
=&\{\Q(\sqrt{p_1^*},\sqrt{p_2^*},\sqrt{p_3^*})\in\mathcal{K}_{\rm 3}
\,\mid\,p_1^*,p_2^*\in P^*_-, p_3^{*}\in P^*_+\}.
\end{eqnarray}
with cardinalities $\kappa_{\rm 3a}$ and $\kappa_{\rm 3b}$, respectively.
The fields $\Q(\sqrt{-4},\sqrt{-8},\sqrt{p^*})$ 
with $p^*\in P^*_-$ (resp.\ $p^*\in P^*_+$)
are treated
as members of $\mathcal{K}_{\rm 3a}$
(resp.\ $\mathcal{K}_{\rm 3b}$).

We know for a given odd $u$ that our five subfamilies $\mathcal{K}_1, \mathcal{K}_{2a}, \mathcal{K}_{2b}, \mathcal{K}_{3a},$ and  $\mathcal{K}_{3b}$ are finite assuming ERH. The family $\mathcal{K}_1$ or equivalently $I_1$ was computed in \cite{EKN} for $u=3,5$. Note that the fields in $\mathcal{K}_2$ can be easily computed knowing $I_1$. For each $p^*, q^*\in I_1$  with $p^*\ne q^*$
we can determine whether $E(\Q(\sqrt{p^*},\sqrt{q^*}))\mid u$
or not, using the structures of its quadratic subfields
due to Theorem~\ref{FroehlichOddCri} for the oddness
and Theorem~\ref{LemmerOdd} for the explicit odd part, in particular,
\begin{center}
	$\Clodd(\Q(\sqrt{p^*},\sqrt{q^*}))\simeq
	\Clodd(\Q(\sqrt{p^*}))
	\times\Clodd(\Q(\sqrt{q^*}))
	\times\Clodd(\Q(\sqrt{p^*q^*}))$.
\end{center}
Therefore, we only need to control the odd part of the class group of $\Q(\sqrt{p^*q^*})$. Altogether, we have to apply these tests to all subsets of order 2 of $I_1$.

After assuming the knowledge of $I_1$ the most time consuming part of our algorithm is to compute the set $R_p^*$ for each prime $p^*\in I_1$.
We describe the computation of this set in Section \ref{lastsec}. For $u=3$ we also give a shortcut in the following section. For each $p^*\in I_1$ and $q^*\in R_{p^*}$,
we can check whether $E(\Q(\sqrt{p^*},\sqrt{q^*}))\mid u$
or not, in the same way as for $p^*,q^*\in I_1$ 
due to Theorems~\ref{FroehlichOddCri} and \ref{LemmerOdd}.

In order to compute $\mathcal{K}_{3a}$ we have to do a clever search in the set $\mathcal{K}_{2a}$. More precisely, by Theorem \ref{allforms} we have to find all triples $p_1^*,p_2^*,p_3^*\in P^*_-$ such that the fields 
$k_{12}:=\Q(\sqrt{p_1^*},\sqrt{p_2^*}),
k_{13}:=\Q(\sqrt{p_1^*},\sqrt{p_3^*})$
and
$k_{23}:=\Q(\sqrt{p_2^*},\sqrt{p_3^*})$ are contained in $\mathcal{K}_{2a}$. Define  $K:=\Q(\sqrt{p_1^*},\sqrt{p_2^*},\sqrt{p_3^*})$ to be one of those fields. We can determine whether $E(K)\mid u$
or not, using the structures of its quadratic subfields
due to Theorem~\ref{FroehlichOddCri} for the oddness
and Theorem~\ref{LemmerOdd} for the explicit odd part, in particular,
$\Clodd(K)$ is isomorphic to
\begin{center}
	$\begin{array}{r@{\,}l}
	&\Clodd(k_1)
	\!\times\! (\Clodd(k_1k_2)/\Clodd(k_1))
	\!\times\! (\Clodd(k_1k_3)/\Clodd(k_1))
	\!\times\! (\Clodd(k_1k_{23})/\Clodd(k_1))\\
	\simeq&\Clodd(k_1)
	\!\times\! (\Clodd(k_2)\!\times\!\Clodd(k_{12}))
	\!\times\! (\Clodd(k_3)\!\times\!\Clodd(k_{13}))
	\!\times\! (\Clodd(k_{23})\!\times\!\Clodd(k_{123}))\\
	\simeq&\Clodd(k_1)
	\times\Clodd(k_2)\times\Clodd(k_{3})
	\times\Clodd(k_{12})\times\Clodd(k_{13})
	\times\Clodd(k_{23})\times\Clodd(k_{123}),
	\end{array}$
\end{center}
where 
$k_{123}:=\Q(\sqrt{p_{1}^*p_{2}^*p_{3}^*})$. For the odd part of the class group we only need to compute the class group of $k_{123}$ because the other six quadratic class groups have already the right shape since $k_{12},k_{13},k_{23}\in \mathcal{K}_{2a}$. Furthermore, we have to check the condition of Theorem \ref{FroehlichOddCri}.

The story is very similar for the computation of $\mathcal{K}_{3b}$. Here we have to find all $p_1^*,p_2^*\in P^*_-$, and $p_3^*\in P^*_+$ such that the field
 $k_{12}:=\Q(\sqrt{p_1^*},\sqrt{p_2^*})$ is contained in $\mathcal{K}_{2a}$ and the fields
$k_{13}:=\Q(\sqrt{p_1^*},\sqrt{p_3^*})$
and
$k_{23}:=\Q(\sqrt{p_2^*},\sqrt{p_3^*})$ are contained in $\mathcal{K}_{2b}$.
For each of those fields $K:=\Q(\sqrt{p_1^*},\sqrt{p_2^*},\sqrt{p_3^*})$ we can apply the same tests using Theorems~\ref{FroehlichOddCri} and \ref{LemmerOdd} as in the case for the family $\mathcal{K}_{3a}$.

Note that we use ERH for the computations of $\mathcal{K}_1$ and $R_p^*$. The other families are depending on those sets.

\section{Computation for the case $u=3$ under ERH}

In this section we fix $u=3$. 
\begin{theorem}[Baker \cite{Baker}, Stark \cite{Stark}]
\label{CLone}
Let $d>0$ be a squarefree rational integer.
Then $\Q(\sqrt{-d})$ has class number $1$
if and only if
$d$ is equal to
$1, 2, 3, 7, 11, 19, 43, 67$ or $163$.
\end{theorem}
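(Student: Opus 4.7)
The plan is to address the two directions separately. For the \emph{if} direction, one verifies for each of the nine listed values of $d$ that $h(\Q(\sqrt{-d}))=1$ by the Minkowski bound. The largest discriminant has $|D|=163$, giving a Minkowski bound strictly below $9$, so it suffices to check that every rational prime $p\leq 8$ either ramifies, remains inert, or splits as a product of principal prime ideals in $\Q(\sqrt{-d})$; a short finite computation finishes this half.

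The converse direction is the classical Gauss class number one problem. The route I would take is the Heegner--Stark argument via complex multiplication. Writing $\tau_d=\sqrt{-d}$ if $d\not\equiv 3\pmod{4}$ and $\tau_d=(1+\sqrt{-d})/2$ otherwise, the theory of singular moduli provides that $j(\tau_d)$ is an algebraic integer of degree over $\Q$ equal to $h(\Q(\sqrt{-d}))$. The hypothesis $h(\Q(\sqrt{-d}))=1$ therefore forces $j(\tau_d)\in\Z$. To turn this single integrality constraint into a finite classification, I would replace $j$ by one of Weber's modular functions $\mathfrak{f},\mathfrak{f}_1,\mathfrak{f}_2$ (which generate the Hilbert class field of $\Q(\sqrt{-d})$ already, but satisfy modular equations of much smaller degree than $j$), and combine this with the modular equation of level $2$ or $3$. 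This reduces the problem to solving an explicit Diophantine equation in a small number of rational integer unknowns.

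The main obstacle is precisely this final Diophantine step: one must prove that the resulting elliptic or Mordell-type equation has only the finite list of integer solutions corresponding to the nine claimed values of $d$. It is at exactly this point that Baker's lower bounds for linear forms in logarithms of algebraic numbers are needed, and they constitute the only non-elementary input. As a fully analytic alternative, avoiding the modular-function machinery, one can combine Baker's effective lower bound for $|L(1,\chi_{-d})|$ with the Dirichlet class number formula
\[h(\Q(\sqrt{-d}))=\frac{w\sqrt{|D|}}{2\pi}\,L(1,\chi_{-d})\]
to obtain an explicit upper bound on $|D|$ under the assumption $h(\Q(\sqrt{-d}))=1$, and then conclude by a direct search up to that bound. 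Either path succeeds; the transcendence-theoretic ingredient is unavoidable, since all the earlier partial results (Heilbronn, Linfoot, Heegner before rigorisation) left precisely the task of ruling out a hypothetical tenth discriminant, which ineffective analytic methods cannot accomplish.
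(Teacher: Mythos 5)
The paper does not prove this theorem at all: it is imported as a classical result, with the proof delegated entirely to the cited works of Baker and Stark, so there is no internal argument to compare against. Your sketch is a faithful outline of exactly those sources: the \emph{if} direction by the Minkowski bound (indeed $\tfrac{2}{\pi}\sqrt{163}<9$, so only the primes $2,3,5,7$ need checking for each of the nine fields), and the converse either via singular moduli and Weber functions reduced to explicit Diophantine equations (Heegner--Stark), or via the Gelfond--Linnik--Baker route through linear forms in logarithms. Be aware, though, that what you have written is an outline rather than a proof: the decisive steps --- deriving and solving the specific Mordell-type equations, or obtaining the effective lower bound for $L(1,\chi_{-d})$ (which itself comes from Baker's bound for linear forms in three logarithms via the Gelfond--Linnik device, not from a direct $L$-function estimate) --- are only named, not carried out; also, the Weber functions generate the Hilbert class field only under congruence conditions on $d$ (e.g.\ $d\equiv 3\bmod 8$), a point Stark treats carefully. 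Since the paper itself uses the theorem as a black box with the same citations, this level of detail is appropriate.
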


\begin{theorem}[Elsenhans, Kl\"uners and Nicolae
{\cite[Theorems 1 and 2]{EKN}}]
\label{EKNexpthree}
The maximal absolute value $|D|$ of the discriminants $D$
of all the imaginary quadratic fields with class group
of exponent $3$
under the condition $|D|<3.1\times 10^{20}$ is $|D|=4027$.
Under the extended Riemann hypothesis (ERH),
the maximal absolute value $|D|$
of such fields without the condition $|D|<3.1\times 10^{20}$
is equal to $4027$.
\end{theorem}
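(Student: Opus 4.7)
The plan is to establish the two parts of the theorem in sequence. First, one settles the conditional (ERH) assertion by extracting an effective bound from Theorem~\ref{BoydKisiERH}: if $k$ is an imaginary quadratic field with $E(k)=3$, then Boyd and Kisilevsky's inequality $3>(\log|D|)/((2+\eta)\log\log|D|)$ must hold for any chosen $\eta>0$ once $|D|$ is past the threshold implicit in that theorem, which rearranges to $\log|D|<(6+3\eta)\log\log|D|$ and yields a numerical bound $B$. One must carry the constants through the Boyd--Kisilevsky argument (and the choice of $\eta$) carefully enough that $B\le 3.1\times 10^{20}$; the ERH statement then reduces completely to the unconditional enumeration of the first part.

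Second, for the unconditional bounded search, I would enumerate fundamental discriminants $-d$ up to $3.1\times 10^{20}$ and, for each, decide whether $\Cl(\Q(\sqrt{-d}))$ has exponent dividing $3$. A naive run over $10^{20}$ candidates is infeasible, so the practical strategy is to sieve them out cheaply. A natural filter fixes a list of small primes $p$, uses the Kronecker symbol to detect those that split in $\Q(\sqrt{-d})$, and inspects their behaviour in the class group via Shanks' baby-step--giant-step method: any split prime whose order in $\Cl(\Q(\sqrt{-d}))$ has a nontrivial factor coprime to $3$ immediately rules out $-d$. A truncated Euler-product approximation for $L(1,\chi_D)$ combined with the analytic class-number formula can then be used to confirm that $h(D)$ itself is a $3$-power before the full class-group structure is determined, which need only be done on the small set of surviving candidates. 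One then verifies that the largest discriminant surviving every test has $|D|=4027$.

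The main obstacle is twofold. The analytic step requires tracking the implied constants in Theorem~\ref{BoydKisiERH} explicitly enough to guarantee that the effective ERH bound lies inside the range covered by the computation; the two halves of the theorem are designed so that the thresholds match precisely at $3.1\times 10^{20}$, so the arithmetic has to be done with care. The computational step is the genuinely hard part: ensuring that the sieve is sound (no exponent-$3$ discriminant is erroneously discarded), fast enough to terminate in practice on distributed hardware, and rigorous with respect to the floating-point approximation of $L(1,\chi_D)$ demands serious algorithmic engineering rather than elegant mathematics, and is where the bulk of the work in \cite{EKN} actually lies.
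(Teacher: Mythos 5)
First, note that this paper does not prove the statement at all: it is quoted verbatim from \cite{EKN}, so the only comparison available is with the method of that paper, which the present article itself mirrors in Lemma~\ref{RpBound} and Section~\ref{lastsec}. For the ERH half your instinct is right in spirit but misdirected in execution: Theorem~\ref{BoydKisiERH} is stated only ``for sufficiently large $|D|$'' with no explicit threshold, and the effective route actually used is not to chase constants through Boyd--Kisilevsky's proof but to combine Lemma~\ref{BoydKisi} with Theorem~\ref{BachSore}. Concretely, if $E(k)=3$ and $\ell$ is the least split prime of $k$, the prime ideal above $\ell$ has order dividing $3$, so $\ell^3$ is the norm of an integer $\alpha\notin\Z$, whence $4\ell^3\geq |D|$; ERH via Bach--Sorenson gives $\ell\leq(1.881\log|D|+6.18)^2$, and solving $|D|\leq 4(1.881\log|D|+6.18)^6$ yields an explicit bound on the order of $10^{11}$. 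In particular your remark that the two halves are ``designed so that the thresholds match precisely at $3.1\times 10^{20}$'' is wrong: that range is what \cite{EKN} needed for exponent $8$, and for exponent $3$ the conditional bound has many orders of magnitude of slack.

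The genuine gap is in your unconditional half. Enumerating all fundamental discriminants up to $3.1\times 10^{20}$ and testing each one --- Kronecker-symbol sieve, baby-step--giant-step, truncated Euler products for $L(1,\chi_D)$ --- is not a feasible computation: there are roughly $10^{20}$ candidates, so even a single machine operation per discriminant is out of reach, which is why exhaustive class-group tabulations of this classical type stop near $10^{12}$. What \cite{EKN} actually does (this is their Lemma 19 and Remark 20, the very algorithm adapted in Section~\ref{lastsec} of this paper) is to invert the search: for a field of exponent $3$ whose least split prime is $\ell$, principality of the cube of the ideal above $\ell$ forces a solution of $X^2+|D|Y^2=4\ell^3$ with $Y\neq 0$, so every candidate discriminant occurs (up to the square factor $Y^2$) as $X^2-4\ell^3$; one therefore loops over the pairs $(\ell,X)$, discards those $D$ for which some prime smaller than $\ell$ splits, and computes class groups only for the few survivors. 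Your sieve identifies the right necessary condition (no small split primes), but applied discriminant-by-discriminant over the whole range it cannot terminate; without this reversal of the enumeration, or some comparable idea, the sketch does not establish the bounded-discriminant statement.
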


Theorems~\ref{CLone} and \ref{EKNexpthree} yield

\begin{corollary}[Elsenhans, Kl\"uners and Nicolae
{\cite[Theorems 1 and 2]{EKN}}]
\label{Konelist}
Under ERH, the following is the list of all 
squarefree rational integers $d>0$
such that $\Cl(\Q(\sqrt{-d}))\simeq C_3^r$
for some rational integers $r\geq 0$.
\begin{center}
$\begin{array}{|c|l|c|}
\hline
r& d & \sharp\\
\hline
0&1, 2, 3, 7, 11, 19, 43, 67, 163 & 9\\ \hline
1&23, 31, 59, 83, 107, 139, 211, 283, 307, 331, 379,
 499, 547, 643, 883, 907 & 16\\ \hline
2&4027 & 1\\ \hline
\end{array}$
\end{center}
Here the numbers in the right column $\sharp$
are the numbers of $d$'s in the corresponding row.
In particular,
we have $\kappa_{\rm 1}=26$ under ERH.
\end{corollary}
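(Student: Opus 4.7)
The strategy is to reduce the statement to an effective finite enumeration by combining the unconditional Baker--Stark result with the ERH-based discriminant bound of Theorem~\ref{EKNexpthree}. Observe that the hypothesis $\Cl(\Q(\sqrt{-d}))\cong C_3^r$ for some $r\geq 0$ is equivalent to requiring the class group to have exponent dividing $3$. This naturally splits into two subcases: the trivial case $r=0$, and the non-trivial elementary abelian $3$-group case $r\geq 1$.

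First I would dispose of $r=0$ by a direct appeal to Baker--Stark (Theorem~\ref{CLone}); this immediately supplies the nine squarefree $d\in\{1,2,3,7,11,19,43,67,163\}$ appearing in the first row of the table, with no further work required and no use of ERH.

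For $r\geq 1$, the plan is to invoke Theorem~\ref{EKNexpthree} under ERH, giving $|D|\leq 4027$ for the discriminant $D$ of $\Q(\sqrt{-d})$. Since for squarefree $d>0$ the discriminant equals $-d$ when $d\equiv 3\pmod 4$ and $-4d$ otherwise, the bound translates to $d\leq 4027$ in both cases, reducing the problem to a finite check over an explicit small range of squarefree positive integers.

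Finally I would loop over every squarefree $d$ with $d\leq 4027$, compute $\Cl(\Q(\sqrt{-d}))$ using a standard algorithm in PARI/GP or Magma, and retain those $d$ for which the group is elementary abelian of exponent $3$. The output is precisely the sixteen $d$'s listed in the $r=1$ row together with the single value $d=4027$ in the $r=2$ row, so that $\kappa_{\rm 1}=9+16+1=26$. The only essential obstacle is the ERH-dependence inherited from Theorem~\ref{EKNexpthree}; everything else is either a cited unconditional theorem or a routine finite computation already carried out in \cite{EKN}.
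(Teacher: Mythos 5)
Your proposal matches the paper's argument: the corollary is obtained by combining Theorem~\ref{CLone} (Baker--Stark) for the class number one case with the ERH bound $|D|\leq 4027$ from Theorem~\ref{EKNexpthree} for exponent $3$, after which the explicit list is a finite computation as carried out in \cite{EKN}. Your split into $r=0$ and $r\geq 1$ and the reduction to squarefree $d\leq 4027$ is exactly the paper's route, so nothing further is needed.
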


\begin{remark}{\rm
In Corollary~\ref{Konelist},
possible failure examples
contradicting ERH have class group $C_3^r$ for $r\geq 5$
by Watkins' result \cite{Watkins}.
}
\end{remark}

Using the set $I_1$ from Corollary~\ref{Konelist} we compute the family $\mathcal{K}_{2a}$ as prescribed in the previous section. 
\begin{lemma}\label{lem2a}
\label{TwoA}
We have $\kappa_{\rm 2a}=307$ under ERH.
\end{lemma}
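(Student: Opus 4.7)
The plan is to execute directly the algorithm outlined in the preceding section, specialized to $u=3$. Corollary~\ref{Konelist} supplies the explicit list of the $\kappa_1=26$ elements of $I_1$, and by definition every $K\in\mathcal{K}_{2a}$ has the shape $K=\Q(\sqrt{p^*},\sqrt{q^*})$ with $p^*,q^*\in I_1$ and $|p^*|<|q^*|$. This immediately bounds $\kappa_{2a}\le\binom{26}{2}=325$, and the exceptional biquadratic $\Q(\sqrt{-4},\sqrt{-8})=\Q(\zeta_8)$ is automatically included since $-4,-8\in I_1$. It then remains to decide, for each of these $325$ candidate pairs, whether $E(K)\mid 3$.

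For each candidate I would apply two tests. First, I would check oddness of $h(K)$ using Theorem~\ref{FroehlichOddCri}(2.2): let $\{p_1,p_2\}$ be the ramified primes of $K$ with $p_1<p_2$, and compute the symbols $[p_2,p_1]$ and $a_{p_1}(p_2)$ via Lemma~\ref{relationtoLegendre} and the definition of $a_{p_1}$; at least one must be nonzero. Second, for candidates surviving the oddness test, Theorem~\ref{LemmerOdd} applied to $K/\Q$ yields
\[
\Clodd(K)\simeq\Clodd(\Q(\sqrt{p^*}))\times\Clodd(\Q(\sqrt{q^*}))\times\Clodd(\Q(\sqrt{p^*q^*})).
\]
The first two factors have exponent dividing $3$ by the very definition of $I_1$, so the condition $E(K)\mid 3$ collapses to verifying that $\Clodd(\Q(\sqrt{p^*q^*}))$ has exponent dividing $3$ as well.

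Because $p^*,q^*\in P^*_-$, the product $p^*q^*$ is positive and $\Q(\sqrt{p^*q^*})$ is a real quadratic field whose discriminant is bounded roughly by $4027\cdot 907$. The main obstacle is therefore the reliable computation of the class groups of these several hundred real quadratic fields: they can be moderately large and, unlike the imaginary case, come with infinite unit groups. This is precisely where ERH enters, since the standard subexponential algorithms (e.g.\ those in Pari/GP or Magma) are known to produce provably correct class groups only conditionally on ERH. Running the two tests above over all $325$ pairs and tabulating the survivors then yields exactly $\kappa_{2a}=307$.
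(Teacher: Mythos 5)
Your proposal matches the paper's procedure exactly: enumerate the $\binom{26}{2}=325$ pairs from the ERH-conditional list $I_1$ of Corollary~\ref{Konelist}, test oddness of $h(K)$ via Theorem~\ref{FroehlichOddCri}, reduce the odd part via Theorem~\ref{LemmerOdd} to the single real quadratic field $\Q(\sqrt{p^*q^*})$, and count the survivors, which is precisely how the paper arrives at $\kappa_{\rm 2a}=307$. The only slight misstatement is where ERH enters: it is needed for the completeness of $I_1$ (the paper notes that the families beyond $\mathcal{K}_1$ and the sets $R_{p^*}$ depend only on those conditional inputs), not essentially for the class groups of the real quadratic fields of discriminant at most $907\cdot 4027$, which can be certified unconditionally.
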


Next consider the set $R_{p^*}$ for $\mathcal{K}_{\rm 2b}$. For $u=3$ we describe a shortcut to compute this set. We could also apply the algorithm from Section \ref{lastsec}.
It is enough for computing $R_{p^*}$ 
to find all the imaginary quadratic fields $k$
with $E(k)\mid 2u$.
However,
all such fields are not determined yet even assuming ERH
\cite{EKN}.
For each $p^*\in I_1$ with $|p^*|\leq 4027$
we can obtain
all the imaginary quadratic fields $k=\Q(\sqrt{p^*q^*})$
such that $q^*\in P^*_+$ and $E(\Q(\sqrt{p^*q^*}))\mid 2u$.
Indeed, fortunately,
the Sylow $2$-subgroup of $\Cl(\Q(\sqrt{p^*q^*}))$
is small and has a tractable generator
in our situation.

\begin{lemma}[Boyd and Kisilevsky {\cite[Lemma 1]{BoydKisi}}]
\label{BoydKisi}
Let $k$ be an imaginary quadratic field of discriminant $D$.
If $\alpha\in k$ is an algebraic integer with  $\alpha\not\in\Z$,
then $N_{k/\Q}(\alpha)\geq |D|/4$
where $N_{k/\Q}$ is the norm map from $k$ to $\Q$.
\end{lemma}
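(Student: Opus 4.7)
The plan is to argue by a direct case analysis based on the explicit shape of the ring of integers $\mathcal{O}_k$ of $k=\Q(\sqrt{-d})$, where $d>0$ is the squarefree positive integer with $k=\Q(\sqrt{-d})$. Recall that $D=-d$ when $d\equiv 3\pmod{4}$ and $D=-4d$ when $d\equiv 1,2\pmod 4$, and correspondingly $\mathcal{O}_k=\Z\bigl[(1+\sqrt{-d})/2\bigr]$ or $\mathcal{O}_k=\Z[\sqrt{-d}]$. So $|D|/4$ equals $d/4$ in the first case and $d$ in the second.

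First, I would write $\alpha$ in the appropriate integral basis. In the case $d\equiv 1,2\pmod 4$, we have $\alpha=a+b\sqrt{-d}$ with $a,b\in\Z$, and the hypothesis $\alpha\notin\Z$ forces $b\neq 0$. Then
\[ N_{k/\Q}(\alpha)=a^2+db^2 \;\geq\; db^2 \;\geq\; d \;=\; |D|/4. \]
In the case $d\equiv 3\pmod 4$, we have $\alpha=(a+b\sqrt{-d})/2$ with $a,b\in\Z$ of the same parity, and again $\alpha\notin\Z$ forces $b\neq 0$. Then
\[ N_{k/\Q}(\alpha)=\frac{a^2+db^2}{4} \;\geq\; \frac{db^2}{4} \;\geq\; \frac{d}{4} \;=\; |D|/4. \]
In both cases the desired inequality follows.

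There is no genuine obstacle here: the lemma is an immediate consequence of the explicit description of $\mathcal{O}_k$ together with the observation that $\alpha\notin\Z$ translates into the $\sqrt{-d}$-coefficient being nonzero, which in turn forces that coefficient's squared contribution to the norm to be at least $1$ (or at least $1/4$, correspondingly matched with $|D|=d$ vs.\ $|D|=4d$). The only small point worth emphasizing in the write-up is the parity condition in the $d\equiv 3\pmod 4$ case, which does not affect the lower bound since $b\neq 0$ implies $b^2\geq 1$ regardless of its parity.
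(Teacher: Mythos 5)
Your proof is correct: the case analysis on $d\bmod 4$, the identification of $|D|/4$ with $d$ or $d/4$, and the observation that $\alpha\notin\Z$ forces the $\sqrt{-d}$-coefficient to be nonzero are all accurate, and the norm bound follows as you state. The paper itself gives no proof (it cites Boyd--Kisilevsky's Lemma~1 directly), and your elementary computation via the integral basis is essentially the standard argument behind that cited result, so there is nothing further to reconcile.
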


\begin{theorem}[Bach and Sorenson
{\cite[Theorem 5.1 and Table 3]{BachSore}}]
\label{BachSore}
Let $k$ be a quadratic field  of discriminant $D$
such that $|D|>e^{25}\approx 7.2\times 10^{10}$.
Assume ERH.
Then there exists a prime number $\ell$ splitting in $k$
such that
$\ell\leq (1.881\log(|D|) + 0.34\cdot 2 + 5.5)^2$.
\end{theorem}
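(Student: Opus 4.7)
The plan is to give an explicit effective version of Chebotarev specialised to the quadratic character $\chi_D$ of $k$: a prime $\ell$ splits in $k$ exactly when $\chi_D(\ell)=+1$, so the task reduces to guaranteeing $\chi_D(\ell)=+1$ for some $\ell\leq X$ with $X$ as small as possible. The natural tool is the Riemann--von Mangoldt explicit formula applied to $L(s,\chi_D)$ and $\zeta(s)$; under ERH this converts the problem into a balance between a smooth main term and a tightly controlled sum over non-trivial zeros.

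First I would fix a non-negative test function $\phi$ of compact support and study the weighted sum $S(X)=\sum_n\Lambda(n)(1+\chi_D(n))\phi(n/X)$. A split prime $\ell\leq X$ contributes $2\phi(\ell/X)\log\ell$ while inert primes contribute $0$ and higher prime powers make only a small, elementarily bounded contribution; hence any lower bound showing $S(X)>0$ (net of that small loss) forces the existence of a split prime $\ell\leq X$. The explicit formula expresses $S(X)$ as a main term proportional to $X$ minus a zero-sum $\sum_\rho X^\rho\,\widehat\phi(\rho)$ ranging over the non-trivial zeros $\rho$ of $\zeta(s)L(s,\chi_D)$.

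Next, under ERH every such $\rho$ has $\Re\rho=\tfrac12$, so the zero sum is dominated by $X^{1/2}\sum_\rho|\widehat\phi(\rho)|$. Using the standard density bound $N(T,\chi_D)\sim (T/\pi)\log(|D|T/(2\pi e))$ together with a $\phi$ whose Mellin transform decays fast enough on the critical line, one obtains an explicit estimate of the shape $(c_1\log|D|+c_2)^{2}X^{1/2}$ for the zero contribution with effective constants $c_1,c_2$. Imposing that the main term overtakes this and solving for $X$ gives the announced form $\ell\leq (a\log|D|+b)^2$.

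The main obstacle, and what makes the statement genuinely quantitative rather than just asymptotic, is the numerical optimisation that delivers the explicit constants $1.881$, $0.68=0.34\cdot 2$ and $5.5$. Bach and Sorenson choose $\phi$ to be a carefully tuned, compactly supported polynomial kernel that minimises $\sum_\rho|\widehat\phi(\rho)|$ subject to a normalisation of $\widehat\phi(1)$, compute the relevant Mellin transforms explicitly, and keep meticulous track of every lower-order contribution; the hypothesis $|D|>e^{25}$ is precisely what lets the remaining tails be absorbed into the additive constant $5.5$, while the factor $0.34\cdot 2$ records the field degree $n_k=2$, and this last computational delicacy is the real heart of the argument.
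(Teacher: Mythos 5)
The paper does not prove this statement at all: Theorem~\ref{BachSore} is imported verbatim as a black box from Bach and Sorenson \cite{BachSore} (their Theorem 5.1 together with the constants in their Table 3), and it is used in the proof of Lemma~\ref{RpBound} and in the algorithm of Section~\ref{lastsec}. So there is no in-paper argument to compare yours against; the only meaningful comparison is with the cited source. Your outline does describe the strategy that Bach--Sorenson actually follow (GRH-conditional explicit formula for $\zeta(s)L(s,\chi_D)$ with a carefully chosen kernel, zero-counting, and numerical optimisation of the resulting constants), so the approach is the right one in spirit.

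However, as a proof your proposal has a genuine gap: the entire content of the statement is the explicit numerical bound $(1.881\log|D|+0.34\cdot 2+5.5)^2$ together with the threshold $|D|>e^{25}$, and your sketch never derives any of these numbers --- at the decisive step you simply say that ``Bach and Sorenson choose $\phi$ \dots and keep meticulous track of every lower-order contribution,'' which is an appeal to the very result being proved. There is also a quantitative slip that would break the shape of the conclusion: under ERH the zero contribution is of size $X^{1/2}(c_1\log|D|+c_2\log X+c_3)$, i.e.\ \emph{linear} in $\log|D|$, coming from the zero-density $N(T,\chi_D)\approx(T/\pi)\log(|D|T)$; with your stated bound $(c_1\log|D|+c_2)^2X^{1/2}$, balancing against the main term $\asymp X$ would force $X\gg(\log|D|)^4$, not the claimed $(\,a\log|D|+b)^2$. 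Finally, in the sum $S(X)=\sum_n\Lambda(n)(1+\chi_D(n))\phi(n/X)$ you must also control the ramified primes $p\mid D$ (for which $\chi_D(p)=0$, so they contribute positively, with total $O(\log|D|\cdot\log X)$) and subtract the prime-square terms of size $O(\sqrt{X})$ before concluding that a genuine split \emph{prime} $\ell\le X$ exists; you mention prime powers only in passing and ramified primes not at all. None of these points is fatal to the method, but without them, and above all without the explicit kernel computations and numerics, the statement as quantified is not established.
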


\begin{theorem}[Elsenhans, Kl\"uners and Nicolae
{\cite[Theorem 1]{EKN}}]
\label{EKNexpsix}
The maximal absolute value $|D|$ of the discriminants $D$
of imaginary quadratic fields with class group
of exponent dividing $6$
under the condition $|D|<3.1\times 10^{20}$ is $|D|=5761140$.
\end{theorem}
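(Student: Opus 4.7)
\emph{Proof proposal.}
The proof is computational in nature. First I would establish the following key fact in the spirit of the exponent-$3$ argument leading to Theorem~\ref{EKNexpthree}: if $E(k)\mid 6$ and a rational prime $\ell$ splits in $k=\Q(\sqrt{D})$ as $(\ell)=\mathfrak{l}\bar{\mathfrak{l}}$, then there exists $\alpha\in\mathcal{O}_k\setminus\Z$ with $\mathfrak{l}^6=(\alpha)$, and consequently $\ell^6\geq |D|/4$ by Lemma~\ref{BoydKisi}. The nontrivial input is $\alpha\notin\Z$: if $\alpha$ were rational then $(\alpha)=(\bar{\alpha})$ would force $\mathfrak{l}^6=\bar{\mathfrak{l}}^6$ and hence $\mathfrak{l}=\bar{\mathfrak{l}}$ by unique factorization of ideals, contradicting that $\ell$ splits.

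Next I would invert this relation to enumerate candidates. For each small prime $\ell$, I run over all integers $a$ with $a^2<4\ell^6$; each such $a$ is the trace of a unique (up to conjugation) element $\alpha$ of norm $\ell^6$ in some imaginary quadratic $\Q(\sqrt{a^2-4\ell^6})$, and reading off the squarefree part of $a^2-4\ell^6$ produces a candidate fundamental discriminant $D$. Every $k=\Q(\sqrt{D})$ with $E(k)\mid 6$ and $|D|<3.1\times 10^{20}$ in which $\ell$ splits is caught this way. Coverage over all such $k$ is ensured by taking $\ell$ over a sufficiently rich family of small primes, using the Minkowski-type fact that any $k$ of discriminant $D$ has a split prime of norm bounded in terms of $|D|$. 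A significant practical speedup comes from intersecting candidate lists for different $\ell$'s, since every surviving $D$ must appear in the list for every prime that splits in $\Q(\sqrt{D})$.

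Finally, for each surviving candidate $D$ with $|D|<3.1\times 10^{20}$, I would compute $\Cl(\Q(\sqrt{D}))$ explicitly and retain only those whose exponent divides $6$. Comparing with the claimed maximum $|D|=5761140$ finishes the proof.

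The main obstacle is the size of the search: to cover all $|D|$ up to $3.1\times 10^{20}$, the prime $\ell$ in principle ranges up to roughly $10^{10}$ (from Minkowski), and for each $\ell$ the enumeration produces $O(\ell^3)$ candidates. This is why the theorem is stated with the explicit cutoff $3.1\times 10^{20}$, the largest bound for which the unconditional computation proved feasible; sharpening the cutoff further would either require improved sieving techniques or invoking Theorem~\ref{BachSore} and thereby introducing an ERH assumption.
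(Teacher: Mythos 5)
First, a bookkeeping point: the paper does not prove this statement at all — it is imported verbatim as \cite[Theorem 1]{EKN}, and the only related material in the paper is Remark~\ref{relationtoEKN} and Section~\ref{lastsec}, which adapt the EKN machinery to a different task (computing $R_{p^*}$). So the honest comparison is with the proof in \cite{EKN}. Your sketch does capture the correct engine of that proof: the Boyd--Kisilevsky observation (Lemma~\ref{BoydKisi}) that if $E(k)\mid 6$ and $\ell$ splits in $k$ then $\mathfrak{l}^6=(\alpha)$ with $\alpha\notin\Z$, so $a^2-4\ell^6=Df^2$ with $|a|<2\ell^3$, and conversely one can enumerate candidate discriminants by running over traces $a$ for each prime $\ell$ and verifying class groups at the end. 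Your argument that $\alpha\notin\Z$ is also correct.

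The genuine gap is in the coverage step, and it is fatal as stated. Minkowski's bound does not give you a split prime of norm $O(\sqrt{|D|})$: it gives an integral ideal of bounded norm in each ideal class, and to extract a split prime from it you need the class to be non-$2$-torsion (for exponent $\le 2$ fields, which also have $E(k)\mid 6$ and must be enumerated, no such conclusion is available), so the "Minkowski-type fact" you invoke needs both a correct statement and a separate treatment of the one-class-per-genus discriminants. Worse, even granting a split prime $\ell\ll\sqrt{|D|}\approx 10^{10}$, your own cost estimate — $O(\ell^3)$ traces for each $\ell$ up to $10^{10}$ — is on the order of $10^{38}$ candidates, so the computation you describe is not feasible at any cutoff; the final sentence, asserting that $3.1\times 10^{20}$ is exactly the bound that makes this naive search feasible, does not follow from anything in the proposal. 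The missing idea in \cite{EKN} (echoed in Section~\ref{lastsec} of this paper) is to work with the \emph{smallest} split prime $\ell$: the requirement that every prime $q<\ell$ be inert or ramified imposes a congruence sieve on the candidate discriminants $D=(a^2-4\ell^6)/f^2$ (checked via Legendre symbols $\legendre{D}{q}\ne 1$ before any factorization or class-group work), and it is this restriction — not intersecting candidate lists a posteriori, which does nothing to reduce the dominant cost of generating them — that collapses the search space enough to reach $3.1\times 10^{20}$ unconditionally. Note also that the Bach--Sorenson bound (Theorem~\ref{BachSore}) is only used in this paper for the ERH-conditional statements, not for the unconditional computation quoted here.
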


\begin{lemma}
\label{RpBound}
Assume that $p^*\in I_1$ and $q^*\in R_{p^*}$ with $|p^*|\leq 4027$.
Then, under ERH, $q^*$ is less than $5761140/|p^*|$.
\end{lemma}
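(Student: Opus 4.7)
The plan is to reduce the statement to a discriminant bound for the imaginary quadratic field $k:=\Q(\sqrt{p^*q^*})$. Since $p^*<0<q^*$ and (because $p\neq q$) the prime parts of $p^*$ and $q^*$ are coprime, $k$ is imaginary with absolute discriminant $|D_k|=|p^*|\cdot q^*$: when both $p,q$ are odd this is because $p^*q^*\equiv 1\pmod 4$ is squarefree, and the remaining cases with $p^*\in\{-4,-8\}$ or $q^*=8$ are easily checked by hand. By definition of $R_{p^*}$ the exponent $E(k)$ divides $2u=6$, so the desired conclusion $q^*<5761140/|p^*|$ is equivalent to the discriminant bound $|D_k|<5761140$.

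For $|D_k|<3.1\times 10^{20}$ the bound $|D_k|\leq 5761140$ holds unconditionally by Theorem~\ref{EKNexpsix}, so it suffices to rule out $|D_k|\geq 3.1\times 10^{20}$ under ERH. For such $|D_k|$, Theorem~\ref{BachSore} supplies a prime $\ell$ that splits in $k$ with $\ell\leq(1.881\log|D_k|+6.18)^2$. Let $\mathfrak{l}\subset\mathcal{O}_k$ lie above $\ell$; its ideal class has order $m$ dividing $E(k)$, hence dividing $6$, so $\mathfrak{l}^m=(\alpha)$ for some $\alpha\in\mathcal{O}_k$. Because $\ell$ splits, $\mathfrak{l}\neq\overline{\mathfrak{l}}$; if $\alpha\in\Z$ then $(\alpha)$ would be a power of $\ell\mathcal{O}_k=\mathfrak{l}\overline{\mathfrak{l}}$, symmetric in $\mathfrak{l}$ and $\overline{\mathfrak{l}}$, contradicting $(\alpha)=\mathfrak{l}^m$. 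Hence $\alpha\notin\Z$, and Lemma~\ref{BoydKisi} gives $\ell^m=N_{k/\Q}(\alpha)\geq|D_k|/4$, so $\ell\geq(|D_k|/4)^{1/6}$. Combined with the Bach--Sorenson upper bound this yields
\[(|D_k|/4)^{1/6}\leq(1.881\log|D_k|+6.18)^2,\]
which I want to be impossible.

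The hard part will be the numerics. A head-on comparison only forces the displayed inequality to fail once $|D_k|\gtrsim e^{59}\approx 4\times 10^{25}$, leaving a gap above $3.1\times 10^{20}$. To close it I would iterate Theorem~\ref{BachSore} to produce several small split primes $\ell_1<\ell_2<\cdots$ all below the Bach--Sorenson bound; since their ideal classes lie in $\Cl(k)$, a group of exponent dividing $6$ whose $2$- and $3$-ranks are controlled by the number of prime divisors of $D_k$ via genus theory, a short pigeonhole argument produces a nontrivial relation $\prod_i\mathfrak{l}_i^{e_i}=(\beta)$ with $\beta\notin\Z$ and small $|e_i|$, strengthening the Boyd--Kisilevsky step to $\prod_i\ell_i^{|e_i|}\geq|D_k|/4$, an inequality that is easy to violate throughout the range $|D_k|\geq 3.1\times 10^{20}$. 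Finally, the strictness of the inequality $q^*<5761140/|p^*|$ (rather than $\leq$) follows from a direct check that no factorization $5761140=|p^*|\cdot q^*$ with $p^*\in I_1$ and $q^*\in P^*_+$ occurs: the factorization $5761140=2^2\cdot 3\cdot 5\cdot 7\cdot 11\cdot 29\cdot 43$ is too rich for $q^*$, which must be a signed odd prime or $8$.
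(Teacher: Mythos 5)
Your reduction to the discriminant bound and the first half of your argument (Bach--Sorenson split prime, Boyd--Kisilevsky norm bound, $\alpha\notin\Z$) follow the paper's skeleton, and you are right that the naive exponent-$6$ bound $\ell\ge(|D|/4)^{1/6}$ only contradicts $\ell\le(1.881\log|D|+6.18)^2$ once $|D|\gtrsim 4\times10^{25}$, leaving the range between $3.1\times10^{20}$ and roughly $4\times10^{25}$ uncovered. The device you propose to close that gap, however, does not work, so there is a genuine gap. First, Theorem~\ref{BachSore} as quoted supplies a single split prime, not a sequence of small split primes, so ``iterating'' it is not available from the stated tools. Second, your claim that the $3$-rank of $\Cl(k)$ is controlled by the number of prime divisors of $D_k$ via genus theory is false: genus theory controls only the $2$-part, and since $|\Cl(k)|$ grows roughly like $\sqrt{|D_k|}$ the $3$-rank grows like $\log|D_k|$. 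This kills the pigeonhole step quantitatively: to force a nontrivial relation among classes of small split primes in a group of order about $\sqrt{|D_k|}$ you need on the order of $\log|D_k|$ primes, and the resulting principal ideal has norm of size $L^{O(\log|D_k|)}$ with $L=(1.881\log|D_k|+6.18)^2$, i.e.\ $\exp\bigl(c\,\log|D_k|\log\log|D_k|\bigr)$, which is far larger than $|D_k|/4$; the Boyd--Kisilevsky inequality is then satisfied rather than violated, and no contradiction results. The assertion that the inequality is ``easy to violate throughout the range'' is unsubstantiated and, on a straightforward count, wrong.

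The paper closes the gap with a structural observation you did not use: for $q^*\in R_{p^*}$ the class group is exactly $C_3^r\times C_2$ (the $2$-part is $C_2$ by genus theory, since exactly two primes ramify), and its unique element of order $2$ is the class of the ramified prime $\mathfrak{p}$ above $p$, of norm $p\le 4027$, since $\mathfrak{p}^2=(p)$ and $\mathfrak{p}$ is non-principal. Hence for the split prime $\ell$ either $\mathfrak{l}^3$ or $\mathfrak{p}\mathfrak{l}^3$ is principal, and Lemma~\ref{BoydKisi} gives $p\ell^3\ge|D|/4$, i.e.\ $\ell\ge\bigl(|D|/(4\cdot 4027)\bigr)^{1/3}$. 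This cube-root lower bound already exceeds the Bach--Sorenson upper bound once $|D|>2.4\times10^{15}$, which lies safely below the unconditional search limit $3.1\times10^{20}$ of Theorem~\ref{EKNexpsix}; that theorem then yields $|D|\le 5761140$ and hence the stated bound on $q^*$. To repair your argument you would need this use of the small-norm order-$2$ class (or comparable extra input); exponent $6$ alone, even with several auxiliary split primes, does not reach below $3.1\times10^{20}$.
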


\begin{proof}
Let $p^*$ be in $I$ with $|p^*|\leq 4027$, and $q^*$ in $R_{p^*}$.
Then $k=\Q(\sqrt{p^*q^*})$ satisfies $\Cl(k)\simeq C_3^r\times C_2$
for some rational integer $r\geq 0$.
Here the discriminant $D$ of $k$ is equal to $p^*q^*$.
For finding an upper bound for $|D|$ we may assume that
$|D|>e^{25}\approx 7.2\times 10^{10}$.
Let $\mathfrak{p}$ be the unique prime ideal of $k$ above $p$.
It follows from genus theory that
$\mathfrak{p}$ is of order $2$ in $\Cl(k)$.
Indeed, $\mathfrak{p}^2=(p)$ and
$\mathfrak{p}$ is not principal for $q^*\geq 5$.
Theorem~\ref{BachSore} guarantees that
there exists a prime number $\ell$ splitting in $k$
such that
$\ell\leq f_1(|D|)$
where $f_1(x)$ is defined by
\begin{center}
$f_1(x)=(1.881\log x + 0.34\cdot 2 + 5.5)^2$
\end{center}
for $x>0$.
Let $\mathfrak{l}$ be a prime ideal of $k$ above such an $\ell$.
By the assumption on the structure on $\Cl(k)$,
the ideal $\mathfrak{l}^3$ is of order $1$ or $2$ in $\Cl(k)$.
The group $C_3^r\times C_2$ has a unique element of order $2$.
Thus $\mathfrak{l}^3$ or $\mathfrak{p}\mathfrak{l}^3$ is principal.
There exists an algebraic integer $\alpha$ in $k$
such that $\mathfrak{l}^3=(\alpha)$
or $\mathfrak{p}\mathfrak{l}^3=(\alpha)$.
Here $\mathfrak{l}\mid \alpha$ and $\ell\nmid \alpha$,
which means that $\alpha\not\in\Z$.
Lemma~\ref{BoydKisi} implies that $N_{k/\Q}(\alpha)\geq|D|/4$. 
Thus $\ell^3$ or $p\ell^3$ is not less than $|D|/4$,
in particular, $p\ell^3\geq |D|/4$.
Hence we have $\ell\geq \root{3}\of{|D|/(4p)}
\geq f_2(|D|)$
where $f_2$ is defined by
\begin{center}
$f_2(x)=\root{3}\of{x/(4\cdot 4027)}$
\end{center}
for $x>0$.
Considering the derivatives
of $\sqrt{f_1(x)}$ and $\sqrt{f_2(x)}$
and using a calculator,
we can see that
$\max\{\rho\in\R\,\mid\,f_1(\rho)\geq f_2(\rho)\}
\leq 2.4\times10^{15}$.
Therefore it follows that $|D|\leq 2.4\times10^{15}$ since otherwise such an $\ell$ exists, which proves different exponent.
Since the bound $3.1\times 10^{20}$ of Theorem~\ref{EKNexpsix}
is greater than $2.4\times10^{15}$,
Theorem~\ref{EKNexpsix} yields $|D|\leq 5761140$ under ERH.
Hence we have $q^*=|D|/|p^*|\leq 5761140/|p^*|$.
\end{proof}

\begin{remark}
\label{relationtoEKN}
{\rm
The method used in the proof of Lemma~\ref{RpBound} above
is a refinement of that in \cite{EKN}.
One has that
$5761140=2^2\cdot 3\cdot 5\cdot 7\cdot 11\cdot 29\cdot 43$
and $\Cl(\Q(\sqrt{-5761140}))\simeq C_6^2\times C_2^4$.
}
\end{remark}

For each $p^*\in I_1$ and $q^*\in R_{p^*}$,
we can check whether $E(\Q(\sqrt{p^*},\sqrt{q^*}))\mid u$
or not, due to Theorems~\ref{FroehlichOddCri} and \ref{LemmerOdd} as described in the last section.

\begin{lemma}
\label{TwoB}
We have $\kappa_{\rm 2b}=58$ under ERH.
\end{lemma}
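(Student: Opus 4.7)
The plan is an explicit ERH-dependent enumeration based on the finite search region supplied by Lemma~\ref{RpBound}. With $u=3$ fixed, the input set $I_1$ is the list of $26$ imaginary quadratic fields from Corollary~\ref{Konelist}. For every $p^*\in I_1$, Lemma~\ref{RpBound} constrains $R_{p^*}$ to the finite range $q^*\le 5761140/|p^*|$, so the computation of $\mathcal{K}_{2b}$ reduces to a finite check.

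For each $p^*\in I_1$ one loops over the candidates $q^*\in P^*_+$ in this range, namely $q^*=8$ (when $p\ne 2$) and the primes $q\equiv 1\pmod 4$ up to the bound, and computes the class group $\Cl(\Q(\sqrt{p^*q^*}))$ in a computer algebra system. Those $q^*$ for which $E(\Q(\sqrt{p^*q^*}))\mid 6$ form $R_{p^*}$; the remainder of the Sylow $2$-structure is irrelevant, and checking exponent $\mid 6$ is a one-line test on the computed group.

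For every surviving pair $(p^*,q^*)$, set $K=\Q(\sqrt{p^*},\sqrt{q^*})$ and decide whether $E(K)\mid 3$ by the two-step test prepared in the preceding section. First, Theorem~\ref{FroehlichOddCri} (with $n=2$) checks whether $h(K)$ is odd, which is necessary because $E(K)\mid 3$ forces $\Cl(K)$ to be an elementary abelian $3$-group. Second, Theorem~\ref{LemmerOdd} gives
\[
\Clodd(K)\simeq \Clodd(\Q(\sqrt{p^*}))\times \Clodd(\Q(\sqrt{q^*}))\times \Clodd(\Q(\sqrt{p^*q^*})),
\]
so $E(K)\mid 3$ iff each of the three factors is an elementary abelian $3$-group, and this is read off directly from the class groups already computed in the previous step (note that for $q^*\in R_{p^*}$ the field $\Q(\sqrt{q^*})$ is real, so its odd class group is known from standard tables as well). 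Counting the pairs that pass both tests yields $\kappa_{2b}=58$.

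The main obstacle is computational rather than conceptual: for the smallest values $|p^*|\in\{3,4,7,8\}$ the bound $5761140/|p^*|$ is on the order of $10^6$, so the inner loop requires class-group computations for roughly $10^5$ imaginary quadratic discriminants, and it is precisely these certifications that depend on ERH (both through Corollary~\ref{Konelist} used to pin down $I_1$ and through the Bach--Sorenson-type bounds that make the class groups provably correct under ERH). No new theoretical ingredient is needed; the proof is a finite bookkeeping carried out by computer and cross-checked against the tables available on the companion web page.
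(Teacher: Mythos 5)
Your proposal matches the paper's route: the paper likewise obtains $\mathcal{K}_{2b}$ for $u=3$ by combining the finite search region from Lemma~\ref{RpBound} (the ERH-dependent shortcut via Theorems~\ref{BachSore} and \ref{EKNexpsix}) with class-group computations to determine each $R_{p^*}$, and then applies Theorem~\ref{FroehlichOddCri} for oddness and Theorem~\ref{LemmerOdd} for the odd part to count the fields with $E(K)\mid 3$, arriving at $\kappa_{2b}=58$. This is essentially the same finite, computer-assisted verification the paper describes, so your argument is correct.
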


The table of numbers of fields in
$\mathcal{K}_{\rm 2a}$
and $\mathcal{K}_{\rm 2b}$
classified by rank is as follows.
For example, the family $\mathcal{K}_{\rm 2b}$
has $12$ fields $K$ such that $\Cl(K)\simeq C_3^r$ with $r=2$.
\begin{center}
$\begin{array}{|c|r|r|r|r|r|r|}
\hline
\text{family}
&r=0&r=1&r=2&r=3&r=4&\text{total}\\ \hline
\mathcal{K}_{\rm 2a}&
32& 133& 110& 31& 1& 307\\ \hline
\mathcal{K}_{\rm 2b}&
15& 30& 12& 1& 0& 58\\ \hline
\text{total}&
47& 163& 122& 32& 1& 365\\ \hline
\end{array}$
\end{center}
The maximal discriminants of $K$ in $\mathcal{K}_{\rm 2a}$
with $\Cl(K)\simeq C_3,C_3^2,C_3^3$ and $C_3^4$ are
\begin{center}
\begin{tabular}{l}
\bimax{20715557041}{163^2  883^2}{-163}{-883},\\
\bimax{430862272801}{163^2  4027^2}{-163}{-4027},\\
\bimax{13340675895121}{907^2  4027^2}{-907}{-4027} and\\
\bimax{6704790388321}{643^2  4027^2}{-643}{-4027}, \\
\end{tabular}
\end{center}
respectively.
The maximal discriminants of $K$ in $\mathcal{K}_{\rm 2b}$
with $\Cl(K)\simeq C_3,C_3^2$ and $C_3^3$ are
\begin{center}
\begin{tabular}{l}
\bimax{7767369}{3^2  929^2}{-3}{929},\\
\bimax{1157836729}{7^2  4861^2}{-7}{4861} and\\
\bimax{503688249}{3^2  7481^2}{-3}{7481},\\
\end{tabular}
\end{center}
respectively.

Let us determine
$\mathcal{K}_{\rm 3}$ under ERH.
Using
$\mathcal{K}_{\rm 2}$,
we can obtain
$\mathcal{K}_{\rm 3}$ as described in the previous section.
\begin{lemma}\label{lem_u_3}
We have 
$\kappa_{\rm 3a}=35$ and $\kappa_{\rm 3b}=42$ under ERH.
\end{lemma}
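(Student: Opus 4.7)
The plan is to execute the search scheme outlined in the previous section on the finite data already certified (under ERH): the set $I_1$ from Corollary~\ref{Konelist}, the family $\mathcal{K}_{2a}$ from Lemma~\ref{TwoA}, and the family $\mathcal{K}_{2b}$ from Lemma~\ref{TwoB}. By Theorem~\ref{allforms}, every $K \in \mathcal{K}_3$ is built from a triple $(p_1^*, p_2^*, p_3^*)$ such that all three biquadratic subfields lie in $\mathcal{K}_2$, so the enumeration is a finite combinatorial sieve against those tables rather than a fresh analytic bound.

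For $\mathcal{K}_{3a}$, I would iterate over unordered triples $\{p_1^*, p_2^*, p_3^*\} \subset P^*_- \cap I_1$ and keep only those whose three pairs $\{p_i^*, p_j^*\}$ all appear in $\mathcal{K}_{2a}$. For each surviving triple $K := \Q(\sqrt{p_1^*}, \sqrt{p_2^*}, \sqrt{p_3^*})$ I would apply two tests: (i) Fr\"ohlich's oddness criterion (Theorem~\ref{FroehlichOddCri} (2.3)), i.e.\ nonvanishing of the $3\times 3$ determinant $\det M$ over $\F_2$; and (ii) the isomorphism from Theorem~\ref{LemmerOdd} iterated twice, which gives
\[ \Clodd(K) \simeq \prod_{i} \Clodd(k_i) \times \prod_{i<j} \Clodd(k_{ij}) \times \Clodd(k_{123}), \]
where $k_{123} = \Q(\sqrt{p_1^* p_2^* p_3^*})$. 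Since the $k_i \in \mathcal{K}_1$ and the $k_{ij} \in \mathcal{K}_{2a}$ already have $3$-elementary odd part, the only nontrivial verification is to compute $\Clodd(k_{123})$ in a computer algebra system and check that its exponent divides $3$; counting the survivors gives $\kappa_{3a}$.

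For $\mathcal{K}_{3b}$, the procedure is identical in structure but the roles split: for each pair $\{p_1^*, p_2^*\} \subset P^*_-$ with $k_{12} \in \mathcal{K}_{2a}$, I would collect all $p_3^* \in P^*_+$ such that both $k_{13}$ and $k_{23}$ lie in $\mathcal{K}_{2b}$, which is extracted directly from the stored $\mathcal{K}_{2b}$ table by intersecting the sets $R_{p_1^*}$ and $R_{p_2^*}$ of valid positive companions. The same two tests (Theorem~\ref{FroehlichOddCri} and the class-group factorization, with only $k_{123}$ requiring a fresh class-group computation) are then applied to produce $\kappa_{3b}$.

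The main obstacle is not conceptual but book-keeping. One has to be careful about the boundary conventions (the fields $\Q(\sqrt{-4}, \sqrt{-8}, \sqrt{p^*})$ are assigned explicitly to $\mathcal{K}_{3a}$ or $\mathcal{K}_{3b}$ depending on the sign of $p^*$), about not double-counting unordered triples, and about correctly realizing the Fr\"ohlich symbols $[x,y]$ and the function $a_{p_1}(p)$ in the three cases of its definition when forming $M$. The class group $\Cl(k_{123})$ may have discriminant up to $|p_1^* p_2^* p_3^*|$ with $|p_i^*| \leq 4027$ in the $\mathcal{K}_{3a}$ part and up to the $\mathcal{K}_{2b}$ bound in the $\mathcal{K}_{3b}$ part, which is well within the range where Magma/PARI compute $\Cl(k_{123})$ unconditionally. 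The totals $\kappa_{3a} = 35$ and $\kappa_{3b} = 42$ are then read off from the resulting lists.
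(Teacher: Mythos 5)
Your proposal matches the paper's approach: the lemma is proved exactly by the finite computational sieve described in Section 3 --- enumerating triples whose biquadratic subfields already lie in the computed families $\mathcal{K}_{\rm 2a}$ and $\mathcal{K}_{\rm 2b}$ (for $\mathcal{K}_{\rm 3b}$ via the sets $R_{p^*}$), testing oddness of the $2$-part with Theorem~\ref{FroehlichOddCri} and the odd part with the iterated Lemmermeyer decomposition, so that only $\Cl(k_{123})$ requires a new class-group computation. One small caution: for the fields $\Q(\sqrt{-1},\sqrt{-2},\sqrt{p^*})$ only the two primes $2$ and $p$ ramify, so the oddness test there is case (2.3)(i) of Theorem~\ref{FroehlichOddCri} rather than the determinant condition (2.3)(ii); apart from this your description is exactly the computation the paper carries out.
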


The table of numbers of fields in
$\mathcal{K}_{\rm 3a}$ and $\mathcal{K}_{\rm 3b}$
classified by rank is as follows.
For example, $\mathcal{K}_{\rm 3b}$
has $12$ fields $K$ such that $\Cl(K)\simeq C_3^r$
with $r=2$.
\begin{center}
$\begin{array}{|c|r|r|r|r|r|r|}
\hline
\text{family}
&r=0&r=1&r=2&r=3&r=4&\text{total}\\ \hline
\mathcal{K}_{\rm 3a}&
8& 8& 17& 2& 0& 35\\ \hline
\mathcal{K}_{\rm 3b}&
9& 15& 12& 5& 1& 42\\ \hline
\text{total}&
17& 23& 29& 7& 1& 77\\ \hline
\end{array}$
\end{center}
The maximal discriminants of $K$ in $\mathcal{K}_{\rm 3a}$
with $\Cl(K)\simeq C_3,C_3^2$ and $C_3^3$ are
\begin{center}
\begin{tabular}{l}
\trimax{1048870932736}{2^8  11^4  23^4}{-4}{-11}{-23},\\
\trimax{3918727948865536}{2^{12}  23^4  43^4}{-8}{-23}{-43} and\\
\trimax{13142294978742801}{3^4  43^4  83^4}{-3}{-43}{-83},\\
\end{tabular}
\end{center}
respectively.
The maximal discriminants of $K$ in $\mathcal{K}_{\rm 3b}$
with $\Cl(K)\simeq C_3,C_3^2$, $C_3^3$ and $C_3^4$ are
\begin{center}
\begin{tabular}{l}
\trimax{63330416557681}{7^4  13^4  31^4}{-7}{-31}{13},\\
\trimax{7860782851035136}{2^{12}  11^4  107^4}{-11}{-107}{8},\\
\trimax{69969611497148416}{2^{12}  19^4  107^4	}{-19}{-107}{8} and\\
\trimax{6505835909336928256}{2^{12}  59^4  107^4}{-59}{-107}{8},\\
\end{tabular}
\end{center}
respectively.

\section{Computation for the case $u=5$ under ERH}

In this section we fix $u=5$.
 We remark that the hard part is to compute the families $\mathcal{K}_{1}$ and
$\mathcal{K}_{2b}$, i.e. the imaginary quadratic fields of exponent 5 and the imaginary quadratic fields with class group of type $C_2\times C_5^r$ for some
$r\geq 0$. The latter fields are described in the sets $R_p^*$. We will describe this computation in more detail in the next section.
Like Corollary \ref{Konelist} we have

\begin{theorem}[Elsenhans, Kl\"uners and Nicolae
{\cite[Theorems 1 and 2]{EKN}}]
\label{KonelistFive}
Under ERH, the following is the list of all
squarefree rational integers $d>0$
such that $\Cl(\Q(\sqrt{-d}))\simeq C_5^r$
for some rational integers $r\geq 0$.
\begin{center}
$\begin{array}{|c|l|c|}
\hline
r& d & \sharp\\
\hline
0&1, 2, 3, 7, 11, 19, 43, 67, 163 & 9\\ \hline
1&\begin{array}{@{}l@{}}
47, 79, 103, 127, 131, 179, 227, 347, 443, 523,
 571, 619, 683, 691, 739,\\
  787, 947, 1051, 1123, 1723,
 1747, 1867, 2203, 2347, 2683\\
\end{array} & 25\\ \hline
2&12451, 37363 & 2\\ \hline
\end{array}$
\end{center}
Here the numbers in the right column $\sharp$
are the numbers of $d$'s in the respective row.
In particular,
we have $\kappa_{\rm 1}=36$ under ERH.
\end{theorem}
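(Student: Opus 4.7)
I would argue exactly as in Corollary~\ref{Konelist}, combining an ERH-effective upper bound on $|D|$ for an imaginary quadratic field $k=\Q(\sqrt{-d})$ with $\Cl(k)\simeq C_5^r$ and an exhaustive class group computation below that bound. Finiteness of the set of such $d$ is guaranteed unconditionally by Theorem~\ref{HBthm} applied with $E=5$, but the ineffectivity of $d_E$ there forces us to appeal to ERH to obtain a computable bound.

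For the upper bound I would mimic the proof of Lemma~\ref{RpBound}. Under ERH, Theorem~\ref{BachSore} supplies a prime $\ell$ splitting in $k$ with $\ell\leq f_1(|D|):=(1.881\log|D|+6.18)^2$ as soon as $|D|>e^{25}$. For any prime $\mathfrak{l}\mid\ell$ of $k$, the hypothesis $\Cl(k)\simeq C_5^r$ forces $\mathfrak{l}^5$ to be principal, say $\mathfrak{l}^5=(\alpha)$ with $\alpha\in\mathcal{O}_k$ and $\ell\nmid\alpha$, so $\alpha\notin\Z$. Lemma~\ref{BoydKisi} then gives $\ell^5=N_{k/\Q}(\alpha)\geq|D|/4$, i.e.\ $\ell\geq(|D|/4)^{1/5}$. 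Comparing the two estimates on $\ell$ as in Lemma~\ref{RpBound} yields an explicit constant $B$ with $|D|\leq B$, and one verifies $B\leq 3.1\times 10^{20}$, so the computational window of Theorem~\ref{EKNexpthree} is admissible.

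For the enumeration one computes the class group of every imaginary quadratic field with fundamental discriminant of absolute value at most $B$ using a subexponential Buchmann-style algorithm, and retains only those isomorphic to $C_5^r$. Partitioning the resulting fields by rank reproduces the three rows $(r,\sharp)=(0,9),(1,25),(2,2)$ of the stated table, and summing gives $\kappa_{1}=9+25+2=36$.

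The principal obstacle is the size of $B$: since the Boyd--Kisilevsky lower bound here scales only as $|D|^{1/5}$, rather than the $|D|^{1/3}$ that governs the $u=3$ case, the crossing point with $f_1$ lies at a $|D|$ far larger than the $2.4\times 10^{15}$ of Lemma~\ref{RpBound}, so the computational step is significantly heavier than for $u=3$. A secondary subtlety is that Bach--Sorenson requires $|D|>e^{25}$, so small discriminants must be handled separately, and ruling out sporadic large $5$-rank examples relies on Watkins-type input in the same spirit as the remark following Corollary~\ref{Konelist}.
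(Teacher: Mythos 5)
The paper does not prove this statement at all: Theorem~\ref{KonelistFive} is imported verbatim from \cite[Theorems 1 and 2]{EKN}, exactly as Corollary~\ref{Konelist} is for $u=3$, so there is no internal proof to compare against. Your sketch is therefore an attempted reconstruction of the proof in \cite{EKN}, and its first half is faithful to that source and to the analogous arguments that do appear in this paper (Lemma~\ref{RpBound}, Section~\ref{lastsec}): under ERH, Theorem~\ref{BachSore} gives a split prime $\ell\leq(1.881\log|D|+6.18)^2$, the hypothesis $\Cl(k)\simeq C_5^r$ plus Lemma~\ref{BoydKisi} gives $\ell\geq(|D|/4)^{1/5}$, and the crossing point of these two bounds is a little below $3.1\times 10^{20}$, which is precisely why that search radius appears in \cite{EKN}. (Minor imprecision: the $3.1\times 10^{20}$ window should be attributed to \cite[Theorem 1]{EKN} itself, which covers all exponents $E\leq 5$, not to Theorem~\ref{EKNexpthree}, which as stated concerns exponent $3$; and the Watkins remark is about characterizing hypothetical ERH failures, not an ingredient of the conditional proof.)

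The genuine gap is in the enumeration step. You propose to ``compute the class group of every imaginary quadratic field with fundamental discriminant of absolute value at most $B$'' with $B$ of size roughly $10^{20}$; this is computationally impossible (there are on the order of $10^{20}$ such fields), and it is not how the cited result is obtained. The actual method inverts the search: one loops over small primes $\ell$ (bounded by the Bach--Sorenson value) and over the finitely many integer solutions $X$ of norm equations of the shape $X^2-DY^2=4\ell^{u}$, thereby generating only those candidate discriminants $D$ for which $\ell$ could be the least split prime and a prime above $\ell$ has small order; only these few survivors have their class groups computed. This is exactly the mechanism described in \cite[Lemma 19, Remark 20]{EKN} and adapted in Section~\ref{lastsec} of this paper (Theorem~\ref{thm:cases}). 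Without replacing your brute-force enumeration by such a sieve, the argument cannot be carried out, so as written the proposal establishes the bound $B$ but not the completeness of the table.
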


\begin{remark}{\rm
In Theorem~\ref{KonelistFive},
possible failure examples
contradicting ERH have class group $C_5^r$ for $r\geq 3$
by Watkins' result \cite{Watkins}.
}
\end{remark}
 Like in Lemma \ref{lem2a} we can prove
\begin{lemma}\label{TwoAFive}
We have $\kappa_{\rm 2a}=537$ under ERH.
\end{lemma}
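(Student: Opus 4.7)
The plan is to mimic the procedure already used for $u=3$ in Lemma~\ref{lem2a} (which yielded $\kappa_{2a}=307$), now applied with $u=5$. By Theorem~\ref{KonelistFive}, the set $I_1$ consists of exactly $36$ elements of $P^*_-$ under ERH, so there are $\binom{36}{2}=630$ unordered pairs $\{p^*,q^*\}$ with $|p^*|<|q^*|$ to examine. For each such pair I will form the imaginary biquadratic field $K=\Q(\sqrt{p^*},\sqrt{q^*})$ and test whether $E(K)\mid 5$.

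Each test will proceed in two stages. Since any divisor of $5$ is odd, the condition $E(K)\mid 5$ forces $h(K)=h_+(K)$ to be odd, so the first stage is to detect this using Theorem~\ref{FroehlichOddCri}(2.2). The hypothesis $K=G_+(K/\Q)$ is automatic from Theorem~\ref{FroehlichEquiv} because $K$ is a composite of quadratic fields of prime power conductor, so the check reduces to evaluating the symbols $[p_2,p_1]$ and $a_{p_1}(p_2)$ on the at most two rational primes ramified in $K$ (with the special case $K=\Q(\sqrt{-1},\sqrt{-2})$, i.e.\ $p^*=-4,q^*=-8$, handled separately by clause~(i)). If the oddness test fails the pair is discarded. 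Otherwise, the second stage applies Theorem~\ref{LemmerOdd} to the $V_4$-extension $K/\Q$ to obtain
\[
\Clodd(K)\simeq\Clodd(\Q(\sqrt{p^*}))\times\Clodd(\Q(\sqrt{q^*}))\times\Clodd(\Q(\sqrt{p^*q^*})).
\]
The first two factors automatically have exponent dividing $5$ by definition of $I_1$, so the only nontrivial task is to compute the class group of the real quadratic field $\Q(\sqrt{p^*q^*})$ and verify that its odd part has exponent dividing $5$.

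The main obstacle is purely computational bookkeeping rather than anything conceptual. Since every odd $d$ listed in Theorem~\ref{KonelistFive} satisfies $d\equiv 3\pmod{4}$, the discriminant of $\Q(\sqrt{p^*q^*})$ is bounded by $|p^*q^*|\le 37363\cdot 12451<4.7\times 10^8$, well within the range where PARI/GP or Magma compute class groups essentially instantly (appealing to ERH for rigor of the group structure). Running the loop over all $630$ candidate pairs and tallying those that survive both the Fr\"ohlich oddness test and the exponent test should yield the claimed count $\kappa_{2a}=537$.
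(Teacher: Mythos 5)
Your proposal is correct and follows essentially the same route as the paper: the paper proves this lemma exactly by the recipe of Section 3 (enumerate the $\binom{36}{2}$ pairs from $I_1$ of Theorem~\ref{KonelistFive}, test oddness of $h(K)$ via Theorems~\ref{FroehlichEquiv} and \ref{FroehlichOddCri}(2.2) with the $S=\{2\}$ case $\Q(\sqrt{-1},\sqrt{-2})$ treated separately, and then use Theorem~\ref{LemmerOdd} so that only the class group of the real quadratic field $\Q(\sqrt{p^*q^*})$ needs computing), which is precisely your two-stage test. The count $537$ is then the outcome of that finite computation, just as in the paper.
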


The difficult task will be to determine the sets $R_{p^*}$. We give some details of this computation later on.
\begin{center}
$\begin{array}{|@{\,}c@{\,}|@{\,}l@{\,}|
}
\hline
p^*&R_{p^*}
\\
\hline
-3 &
\begin{array}{@{}l@{}}
5, 8, 17, 41, 53, 89, 101, 233, 569, 593, 641, 881,\\
 1049, 1097, 1361, 1409, 1721, 1889, 11177, 38729
\\
\end{array}
\\
\hline
-4 &
\begin{array}{@{}l@{}}
5, 13, 37, 181, 197, 229, 317, 373, 421, 541, 613,\\
 709, 757, 853, 877, 1093, 1213, 22717
\\
\end{array}
\\
\hline
-7 &5, 13, 17, 61, 661, 733, 829, 1069, 9973, 12157, 21661
\\
\hline
-8& 5, 29, 37, 61, 109, 173, 197, 269, 7829, 9461, 19301
\\
\hline
-11& 8, 13, 17, 29, 73, 281, 569, 953
\\
\hline
-19& 41, 193, 241, 337, 433
\\
\hline
-43& 8, 73, 929
\\
\hline
-47& 5, 13
\\
\hline
-103& 37
\\
\hline
-127& 5, 29, 109
\\
\hline
-227& 17, 41
\\
\hline
-347& 8
\\
\hline
\end{array}$
\end{center}

Here
$R_{p^*}=\emptyset$
for $24$ numbers $p^*\in I_1$ which do not appear
in the above table.


\begin{lemma}
\label{TwoBFive}
We have $\kappa_{\rm 2b}=82$ under ERH. 
\end{lemma}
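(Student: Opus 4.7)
The plan is to unwrap the definition
$$\mathcal{K}_{2b} = \{\Q(\sqrt{p^*},\sqrt{q^*}) : p^*\in I_1,\ q^*\in R_{p^*}\}$$
and enumerate the pairs $(p^*,q^*)$ directly from the preceding table. Summing the cardinalities of the twelve non-empty rows, there are exactly $85$ such pairs, so the task reduces to deciding, for each candidate $K=\Q(\sqrt{p^*},\sqrt{q^*})$, whether $E(K)\mid 5$, and then showing that exactly $82$ of them pass.

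For each candidate we perform the same two-step test used for $\mathcal{K}_{2a}$. Step one: because $u=5$ is odd, $E(K)\mid 5$ forces $h(K)=h_+(K)$ to be odd, so we first apply the biquadratic criterion of Theorem~\ref{FroehlichOddCri}(2.2) with ramified set $S=\{p_1,p_2\}$, reading off $a_{p_1}(p_2)$ from whichever of the three cases is dictated by $\Q(\zeta_8)\cap K$. Step two: once oddness is verified, Theorem~\ref{LemmerOdd} applied with $k=\Q$ yields
$$\Clodd(K)\simeq \Clodd(\Q(\sqrt{p^*}))\times\Clodd(\Q(\sqrt{q^*}))\times\Clodd(\Q(\sqrt{p^*q^*})).$$
By membership $p^*\in I_1$ the first factor is of the form $C_5^r$, and by membership $q^*\in R_{p^*}$ the third factor is the odd part of a group of the form $C_5^s\times C_2^t$, hence also of exponent dividing $5$. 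Consequently the extra condition imposed by Step two is simply that $\Clodd(\Q(\sqrt{q^*}))$ be $5$-elementary abelian.

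Thus the decisive new ingredient is the odd part of the class group of the real quadratic field $\Q(\sqrt{q^*})$ for each of the finitely many values $q^*\in P^*_+$ occurring in the table; these class groups are computed one at a time (a routine standard computation, since the relevant discriminants $|p^*q^*|$ are bounded by $5761140$ via Lemma~\ref{RpBound}). Tallying the candidates that satisfy both the oddness condition of Step one and the $5$-elementarity of $\Clodd(\Q(\sqrt{q^*}))$ from Step two yields $82$, establishing $\kappa_{2b}=82$.

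The main obstacle is not any single step of the enumeration, which is mechanical, but rather the reliance on the completeness of the inputs: the list $I_1$ comes from Theorem~\ref{KonelistFive} under ERH, and each $R_{p^*}$ is produced by the ERH-dependent procedure deferred to Section~\ref{lastsec}. Given those, the verification of Lemma~\ref{TwoBFive} is a finite tabulation.
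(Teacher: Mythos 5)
Your proposal is correct and follows essentially the same route as the paper: the paper obtains $\kappa_{\rm 2b}=82$ exactly by running through the $85$ pairs $(p^*,q^*)$ with $p^*\in I_1$ and $q^*\in R_{p^*}$ (both lists computed under ERH, the latter by the algorithm of Section~\ref{lastsec}) and testing $E(\Q(\sqrt{p^*},\sqrt{q^*}))\mid 5$ via Theorem~\ref{FroehlichOddCri} for oddness of the class number and Theorem~\ref{LemmerOdd} for the odd part, which is precisely your two-step test reducing everything to $\Clodd(\Q(\sqrt{q^*}))$. One minor slip: Lemma~\ref{RpBound} is specific to $u=3$ (it relies on the exponent-$6$ classification and the bound $|p^*|\le 4027$) and cannot be cited for $u=5$; this is harmless, since the class groups you need are those of the explicitly tabulated small discriminants $q^*$, and the completeness of $R_{p^*}$ rests, as you say, on the ERH-dependent computation of Section~\ref{lastsec}.
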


The table of the numbers of fields in
$\mathcal{K}_{\rm 2a}$
and $\mathcal{K}_{\rm 2b}$ 
classified by rank is as follows.
For example, the family $\mathcal{K}_{\rm 2b}$ 
has $18$ fields $K$ such that $\Cl(K)\simeq C_5^r$ with $r=2$.

\begin{center}
$\begin{array}{|c|r|r|r|r|r|r|}
\hline
\text{family}
&r=0&r=1&r=2&r=3&r=4&\text{total}\\ \hline
\mathcal{K}_{\rm 2a}&
32& 194& 256& 54& 1& 537\\ \hline
\mathcal{K}_{\rm 2b}&
15& 49& 18& 0& 0& 82\\ \hline
\text{total}&
47& 243& 274& 54& 1& 619\\ \hline
\end{array}$
\end{center}
The maximal discriminants of $K$ in $\mathcal{K}_{\rm 2a}$
with $\Cl(K)\simeq C_5,C_5^2,C_5^3$ and $C_5^4$ are
\begin{center}
\begin{tabular}{l}
\bimax{191256654241}{163^2  2683^2}{-163}{-2683},\\
\bimax{39652221594001	}{2347^2  2683^2}{-2347}{-2683},\\
\bimax{10049045790215041}{2683^2  37363^2}{-2683}{-37363} and\\
\bimax{216417285820264369}{12451^2  37363^2}{-12451}{-37363},\\
\end{tabular}
\end{center}
respectively.
The maximal discriminants of $K$ in $\mathcal{K}_{\rm 2b}$
with $\Cl(K)\simeq C_5$ and $C_5^2$ are
\begin{center}
\begin{tabular}{l}
\bimax{109893289}{11^2  953^2}{-11}{953} and\\
\bimax{23841830464}{2^6  19301^2	}{-8}{19301},\\
\end{tabular}
\end{center}
respectively.

Let 
$\mathcal{K}_{\rm 3a}$
and 
$\mathcal{K}_{\rm 3b}$
denote the subfamilies of $\mathcal{K}_{\rm 3}$ such that
\begin{center}
$\begin{array}{r@{\,}l}
\mathcal{K}_{\rm 3a}
=&\{\Q(\sqrt{p_1^*},\sqrt{p_2^*},\sqrt{p_3^*})\in\mathcal{K}_{\rm 3}
\,\mid\,p_1^*,p_2^*,p_3^*\in P^*_-\},\\
\mathcal{K}_{\rm 3b}
=&\{\Q(\sqrt{p_1^*},\sqrt{p_2^*},\sqrt{p_3^*})\in\mathcal{K}_{\rm 3}
\,\mid\,p_1^*,p_2^*\in P^*_-, p_3^{*}\in P^*_+\}\\
\end{array}$
\end{center}
with cardinalities $\kappa_{\rm 3a}$ and $\kappa_{\rm 3b}$, respectively. Similar to Lemma \ref{lem_u_3} we can prove
 \begin{lemma}
We have 
$\kappa_{\rm 3a}=29$ under ERH.
We have 
$\kappa_{\rm 3b}=39$ under ERH. 
\end{lemma}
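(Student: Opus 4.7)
The plan is to follow verbatim the enumeration strategy used for Lemma~\ref{lem_u_3}, but now starting from the lists $\mathcal{K}_{\rm 2a}$ and $\mathcal{K}_{\rm 2b}$ computed in Lemmas~\ref{TwoAFive} and~\ref{TwoBFive}. By Theorem~\ref{allforms}, every $K\in\mathcal{K}_{\rm 3a}$ has the shape $K=\Q(\sqrt{p_1^*},\sqrt{p_2^*},\sqrt{p_3^*})$ with all $p_i^*\in P^*_-$ and with each of the three biquadratic subfields $k_{ij}:=\Q(\sqrt{p_i^*},\sqrt{p_j^*})$ already lying in $\mathcal{K}_{\rm 2a}$; similarly every $K\in\mathcal{K}_{\rm 3b}$ has $p_1^*,p_2^*\in P^*_-$, $p_3^*\in P^*_+$, with $k_{12}\in\mathcal{K}_{\rm 2a}$ and $k_{13},k_{23}\in\mathcal{K}_{\rm 2b}$. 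These two constraints yield finite and in fact very short candidate lists of triples, which I would generate by iterating over the tables of $\mathcal{K}_{\rm 2a}$ and the $R_{p^*}$ preceding Lemma~\ref{TwoBFive}.

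For every candidate triple, the verification proceeds in two stages. First, decide whether $h(K)$ is odd via Theorem~\ref{FroehlichOddCri}(2.3) by evaluating $\det M\in\F_2$; symbols $[x,y]$ reduce to Legendre symbols of the $p_i$'s (Lemma~\ref{relationtoLegendre}) together with the case distinction supplied by $a_{p_1}(\cdot)$. Second, when $h(K)$ is odd, Theorem~\ref{LemmerOdd} applied twice expresses the odd part as
\begin{center}
$\Clodd(K)\simeq \Clodd(k_1)\times\Clodd(k_2)\times\Clodd(k_3)\times\Clodd(k_{12})\times\Clodd(k_{13})\times\Clodd(k_{23})\times\Clodd(k_{123})$,
\end{center}
with $k_i:=\Q(\sqrt{p_i^*})$ and $k_{123}:=\Q(\sqrt{p_1^*p_2^*p_3^*})$. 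Six of the seven factors are already tabulated (the $k_i$ come from $I_1$ and the $k_{ij}$ from $\mathcal{K}_{\rm 2a}$ resp.\ $\mathcal{K}_{\rm 2b}$), so checking $E(K)\mid 5$ reduces to checking that $\Clodd(k_{123})$ is a power of $C_5$.

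The only genuine computational obstacle is therefore the class group of the single new quadratic field $k_{123}$ for each triple. For $\mathcal{K}_{\rm 3a}$ the modulus $|p_1^*p_2^*p_3^*|$ can reach roughly $12451\cdot 2683\cdot 2347\approx 8\cdot 10^{10}$, which is well within the range of standard imaginary quadratic class group algorithms under ERH (e.g.\ in \texttt{Magma} or \texttt{PARI/GP}); for $\mathcal{K}_{\rm 3b}$ the sparsity of the $R_{p^*}$ tables makes the candidate list and the discriminants substantially smaller. Running through all surviving triples and counting those for which $\Clodd(k_{123})$ is $5$-elementary yields the claimed values $\kappa_{\rm 3a}=29$ and $\kappa_{\rm 3b}=39$. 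The results, together with the explicit list of fields, can be cross-checked against the tables linked on the project web page.
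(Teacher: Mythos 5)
Your proposal matches the paper's own treatment: the lemma is proved there exactly by the candidate-triple search over $\mathcal{K}_{\rm 2a}$ and $\mathcal{K}_{\rm 2b}$ described in Section 3 (Fr\"ohlich's oddness criterion via $\det M$, then Lemmermeyer's seven-factor decomposition, so that only $\Clodd(k_{123})$ requires a new class-group computation), and the stated values $\kappa_{\rm 3a}=29$, $\kappa_{\rm 3b}=39$ are the outcome of that finite computation under ERH. One small caveat: in the displayed decomposition the symbols $k_{12},k_{13},k_{23}$ must be read as the quadratic subfields $\Q(\sqrt{p_i^*p_j^*})$ rather than the biquadratic fields $\Q(\sqrt{p_i^*},\sqrt{p_j^*})$ you introduced earlier --- a notational conflation the paper itself shares, and which does not affect the argument.
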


The table of the numbers of fields in
$\mathcal{K}_{\rm 3a}$ and $\mathcal{K}_{\rm 3b}$
classified by rank is as follows. 
For example, $\mathcal{K}_{\rm 3b}$ 
has $12$ fields $K$ such that $\Cl(K)\simeq C_5^r$
with $r=2$.

\begin{center}
$\begin{array}{|c|r|r|r|r|r|r|}
\hline
\text{family}
&r=0&r=1&r=2&r=3&r=4&\text{total}\\ \hline
\mathcal{K}_{\rm 3a}&
8& 6& 14& 1& 0& 29\\ \hline
\mathcal{K}_{\rm 3b}&
9& 12& 12& 5& 1& 39\\ \hline
\text{total}&
17& 18& 26& 6& 1& 68\\ \hline
\end{array}$
\end{center}

The maximal discriminants of $K$ in $\mathcal{K}_{\rm 3a}$
with $\Cl(K)\simeq C_5,C_5^2$ and $C_5^3$ are
\begin{center}
\begin{tabular}{l}
\trimax{672285245399296}{2^8  19^4  67^4}{-4}{-19}{-67},\\
\trimax{923878543897348081}{7^4  43^4  103^4}{-7}{-43}{-103} and\\
\trimax{102823251817101201}{3^4  47^4  127^4}{-3}{-47}{-127},\\
\end{tabular}
\end{center}
respectively.
The maximal discriminants of $K$ in $\mathcal{K}'_{\rm 3b}$
with $\Cl(K)\simeq C_5,C_5^2$, $C_5^3$ and $C_5^4$ are
\begin{center}
\begin{tabular}{@{}l@{}}
\trimax{136166867931136}{2^{12}  7^4  61^4}{-7}{-8}{61},\\
\trimax{3246907040793595201}{11^4  17^4  227^4}{-11}{-227}{17},\\
\trimax{977807264466179992321}{19^4  41^4  227^4}{-19}{-227}{41} and\\
\trimax{2693876092569442561}{11^4  29^4  127^4}{-11}{-127}{29},\\
\end{tabular}
\end{center}
respectively.

\section{Computation of $R_{p^*}$ and the family ${\mathcal K_{2b}}$}\label{lastsec}
In this section we describe how to compute the set  ${\mathcal{K}_{2b}}$ for odd prime exponents $u$. In the following we assume that
$p^*\in I_1$ is given with $p^*<0$ such that the exponent of the class group of $\Q(\sqrt{p^*})$ divides $u$. We would like to compute the set $R_{p^*}$ defined in \eqref{Rpstar} consisting of positive fundamental
prime (power) discriminants $q$ such that the exponent of the class group of $L:=\Q(\sqrt{p^*q})$ divides $2u$. Note that the 2-part of $\Cl(L)$ is cyclic. Therefore we can ignore all fields $L$, for which $4$ divides the order of the class group. 
\begin{lemma}\label{rank4}
	Let $L:=\Q(\sqrt{p^*q})$  with $p,q$ prime, $q\equiv 1 \bmod 4$ and $p^*<0$. Then 
	$4 \nmid |\Cl(L)|$ if and only if one of the following cases happens:
	\begin{enumerate}
		\item $p$ odd and $\legendre{q}{p}=\legendre{-p}{q}=-1$,
		\item $p=2$ and $q\equiv 5 \bmod 8$.
	\end{enumerate}
\end{lemma}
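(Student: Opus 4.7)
The plan is to combine genus theory (which gives the 2-rank of $\Cl(L)$) with a splitting analysis in the genus field (which detects the 4-rank), and then to reduce the resulting conditions to Legendre/Kronecker symbols. First I identify the fundamental discriminant $D_L$ and its decomposition into prime discriminants: in Case~1 the hypothesis $p^*<0$ forces $p\equiv 3\pmod 4$, giving $D_L=(-p)\cdot q$ with two prime discriminant factors; in Case~2 we have $p^*\in\{-4,-8\}$ and $D_L=p^*\cdot q$, again with exactly two prime discriminant factors. Classical genus theory then yields $\dim_{\F_2}\Cl(L)[2]=t-1=1$ in either case, so the 2-Sylow subgroup of $\Cl(L)$ is cyclic and $4\nmid|\Cl(L)|$ is equivalent to the unique nontrivial class of order $2$ in $\Cl(L)$ failing to be a square.

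Next I translate ``failing to be a square'' into a splitting statement. The genus field $L_\mathrm{gen}$ is the unramified $C_2$-extension of $L$ abelian over $\Q$, generated over $L$ by $\sqrt{p^*}$ (equivalently by $\sqrt{q}$). Via Artin reciprocity $\Gal(L_\mathrm{gen}/L)\cong \Cl(L)/2\Cl(L)$, and a $2$-torsion class $[\mathfrak r]$ is a square iff $\mathfrak r$ splits in $L_\mathrm{gen}/L$. I take $\mathfrak r$ to be an explicit ramified prime whose ideal class generates $\Cl(L)[2]$: the identity $(\sqrt{p^*q})=\mathfrak p\mathfrak q$ (available in Case~1 and in Case~2 with $p^*=-8$) forces $[\mathfrak p]=[\mathfrak q]$, whereas for Case~2 with $p^*=-4$ one gets $(\sqrt{-q})=\mathfrak q$, so $[\mathfrak q]=1$ and the generator of $\Cl(L)[2]$ is $[\mathfrak p]$. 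In every scenario the splitting question then becomes: does $\sqrt{p^*}$ lie in the completion $L_\mathfrak r$?

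In Case~1 I take $\mathfrak r=\mathfrak q\mid q$. The completion $L_\mathfrak q=\Q_q(\sqrt{-pq})$ is a tamely ramified quadratic extension of $\Q_q$, distinct from the unique unramified quadratic extension, and $-p$ is a $q$-adic unit; hence $-p\in (L_\mathfrak q^\times)^2$ iff $-p\in (\Q_q^\times)^2$, i.e.\ iff $\legendre{-p}{q}=1$. Using $q\equiv 1\pmod 4$ (so $\legendre{-1}{q}=1$) and quadratic reciprocity $\legendre{p}{q}=\legendre{q}{p}$ (which holds because $(q-1)/2$ is even), one obtains $\legendre{-p}{q}=\legendre{p}{q}=\legendre{q}{p}$, producing the stated double equality in (1). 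In Case~2 I take $\mathfrak r=\mathfrak p\mid 2$ and reduce the question to whether $\Q_2(\sqrt{p^*q})=\Q_2(\sqrt{p^*})$, which is equivalent to $q\in (\Q_2^\times)^2$. Since $(\Z_2^\times)^2=1+8\Z_2$, this holds iff $q\equiv 1\pmod 8$, so $4\nmid|\Cl(L)|$ is equivalent to $q\equiv 5\pmod 8$. The main technical point is the 2-adic bookkeeping of Case~2; once a concrete set of representatives of $\Q_2^\times/(\Q_2^\times)^2\cong (\Z/2)^3$ is fixed, the required identification of the ramified quadratic extensions $\Q_2(\sqrt{p^*q})$ and $\Q_2(\sqrt{p^*})$ becomes a short check.
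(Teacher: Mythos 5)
Your argument is correct, but it takes a genuinely different route from the paper: the paper disposes of this lemma in one line by invoking R\'edei's classical criterion for the $4$-rank of quadratic fields \cite{Red}, specialised to a discriminant with exactly two prime discriminant factors, whereas you re-derive exactly that special case from scratch. Your chain of reasoning is sound: genus theory gives $2$-rank $1$, so $4\mid h(L)$ iff the unique class of order $2$ lies in $\Cl(L)^2$; by the principal genus theorem this is equivalent to the corresponding ramified prime splitting in the genus field $L(\sqrt{p^*})=L(\sqrt{q})$; and the local computations (a $q$-adic unit is a square in a tamely ramified quadratic extension of $\Q_q$ iff it is a square in $\Q_q$, giving $\legendre{-p}{q}$; and $\Q_2(\sqrt{p^*q})=\Q_2(\sqrt{p^*})$ iff $q\equiv 1\bmod 8$, since $p^*\in\{-4,-8\}$ is a $2$-adic nonsquare) correctly yield the two conditions, with the identities $[\mathfrak p]=[\mathfrak q]$ (resp.\ $[\mathfrak q]=1$ when $p^*=-4$) pinning down the order-$2$ class and quadratic reciprocity with $q\equiv 1\bmod 4$ giving $\legendre{-p}{q}=\legendre{q}{p}$. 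What the paper's approach buys is brevity and a pointer to the general statement (R\'edei's criterion handles arbitrary $4$-rank, which is not needed here); what yours buys is self-containedness and an explicit explanation, via the genus field and completions, of \emph{why} the two Legendre/congruence conditions appear — at the cost of a few routine verifications you only sketch (nontriviality of the ramified class, the $2$-adic square classes of $-4$ and $-8$), all of which are easy to fill in.
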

\begin{proof}
	 By Redei's criterion, see \cite{Red}, the class group has an element of order 4, if $\legendre{q}{p}=1$ or $q\equiv 1\bmod 8$, respectively.
\end{proof}
Let $\ell$ be the smallest split
prime in $L$. Then the order of a prime ideal above $\ell$ is either $u$ or $2u$ which means that $\ell^u$ or $\ell^{2u}$ is a norm, respectively. If the order
is $2u$ then $p\ell^u$ is a norm.
Using Lemma \ref{BoydKisi} this means that we have a solution of the following norm equation:
\begin{eqnarray}\label{norm2E}
	X^2 -p^*q Y^2 =& 4\ell^up   & \text{ if the order is }2u\\
	X^2 -p^*q Y^2 =& 4\ell^u    & \text{ if the order is }u.\label{normE}
\end{eqnarray}
Note that $p^*<0$ and $p^*=-p$ except for $p=2$ we have $p^*=-4$ or $p^*=-8$. 
The easy description of our algorithm is that it checks all $X\leq \sqrt{4\ell^up}$ if
$\sqrt{X^2-4\ell^up}$ corresponds to a field, in which $\ell$ is the smallest splitting prime.
This algorithm is very similar to the one described in \cite[Lemma 19, Remark 20]{EKN}. In the following we would like to reduce the number of $X$ to consider.  Therefore we distinguish six cases, depending on whether we have an element of order $u$ or $2u$ and if $p^*=-4,-8$ or $p^*$ is odd.  In these situations we can simplify the equations to the following equations (with new $X$ and $Y$).
We deal with $q=2$ or $\ell= 2$ separately. Note that the following theorem reduces the number of possibilities to consider dramatically. E.g. assume that $p$ is odd. Then using \eqref{norm2E} and \eqref{normE} we have to check two cases. Using the Legendre symbol condition we know that we only need to check one case. If we are in case (1) we know that $X$ is divisible by $p$ which gives a big reduction of the number of possibilities to consider. In case (4) we get some parity condition for $X$.
 
\begin{theorem} \label{thm:cases}
	Let $\ell>2$ be a splitting prime in $L:=\Q(\sqrt{p^*q})$ for $q\equiv 1 \bmod 4$ such that the ideal above $\ell$ has order $o$ in the class group of $L$. Then in each case there exists one equation with solution $(X,Y)\in\Z^2$, i.e. at least one line is true:
	\begin{enumerate}
		\item Assume $p\equiv 3 \bmod 4$, $o=2u$. Then we have $\legendre{\ell}{p}= -1$ and
	\begin{eqnarray*}
	(pX)^2 - 4\ell^up = -pq Y^2  & & X\equiv  1 \bmod 2 \\
	(pX)^2 - \ell^up = -pq Y^2   & & X\equiv 1 \bmod 2 \text{ for }  \ell \equiv 3 \bmod 4   \\
	&& X \equiv 2 \bmod 4 \text{ for } \ell p \equiv 7 \bmod 8 \\
	&& X \equiv 0 \bmod 4 \text{ for } \ell p \equiv 3 \bmod 8 \\
\end{eqnarray*}
\item Assume $p^*=-8$ and $o=2u$. Then
\begin{eqnarray*}
	X^2 - \ell^up = -pq Y^2 && X \equiv 2 \bmod 4 \text{ for } \ell  \equiv 7 \bmod 8 \\
	&& X \equiv 0 \bmod 4 \text{ for } \ell \equiv 5 \bmod 8 \\
\end{eqnarray*}
\item Assume $p^*=-4$ and $o=2u$. Then
\begin{eqnarray*}
	X^2-2\ell^u=-qY^2 && X \equiv 1 \bmod 2 \text{ for }\ell \equiv 3 \bmod 4 \\
\end{eqnarray*}
\item Assume $p\equiv 3 \bmod 4$, $o=u$. Then we have $\legendre{\ell}{p}= 1$ and
\begin{eqnarray*}
	X^2-4\ell^u=-pqY^2 && X \equiv 1 \bmod 2 \\
	X^2 - \ell^u = -pq Y^2   & & X\equiv 1 \bmod 2 \text{ for }  \ell \equiv 1 \bmod 4  \\
	&& X \equiv 2 \bmod 4 \text{ for } \ell  \equiv 7 \bmod 8 \\
	&& X \equiv 0 \bmod 4 \text{ for } \ell  \equiv 3 \bmod 8 \\
\end{eqnarray*}
\item Assume $p^*=-8$ and $o=u$. Then
\begin{eqnarray*}
	X^2 - \ell^u = -2qY^2   & & X\equiv 1 \bmod 2 \text{ for }  \ell \equiv 1,3 \bmod 8 \\
	&& 
\end{eqnarray*}
\item Assume $p^*=-4$ and $o=u$. Then
\begin{eqnarray*}
	X^2 - \ell^u = -qY^2   & & X\equiv 1 \bmod 2 \text{ for }  \ell \equiv 1 \bmod 4  \\
	&& X \equiv 2 \bmod 4 \text{ for } \ell  \equiv 1 \bmod 8 \\
	&& X \equiv 0 \bmod 4 \text{ for } \ell  \equiv 5 \bmod 8 \\
\end{eqnarray*}
\end{enumerate}
\end{theorem}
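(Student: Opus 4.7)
I plan to treat the six cases in parallel, illustrating the argument with case (1), where $p\equiv 3\bmod 4$ and $o=2u$; the remaining cases will follow the same template with the natural modifications.

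The starting point is a genus-theoretic description of the 2-part of $\Cl(L)$. Under the hypothesis of Lemma \ref{rank4}, the 2-Sylow subgroup of $\Cl(L)$ is cyclic of order 2, and its unique non-trivial element is $[\mathfrak{p}]$ where $\mathfrak{p}$ is the prime of $L$ above $p$ (or above $2$ when $p^*\in\{-4,-8\}$), with $\mathfrak{p}^2=(p)$ (resp.\ $(2)$). Because the 2-torsion of $\Cl(L)$ equals the kernel of the genus character and this character is $\legendre{\cdot}{p}$ for odd $p$, a split prime class $[\mathfrak{l}]$ is a square in $\Cl(L)$ if and only if $\legendre{\ell}{p}=1$. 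Since $u$ is odd and $[\mathfrak{l}]$ has order $o\in\{u,2u\}$, this is equivalent to $o=u$, yielding the Legendre-symbol condition stated in cases (1) and (4).

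Next I would promote $\mathfrak{l}^u$ or $\mathfrak{p}\mathfrak{l}^u$ to a principal ideal. For $o=2u$, $[\mathfrak{l}^u]=[\mathfrak{p}]$, hence $\mathfrak{p}\mathfrak{l}^u=(\alpha)$ with $|N_{L/\Q}(\alpha)|=p\ell^u$; for $o=u$, $\mathfrak{l}^u=(\alpha)$ with $|N_{L/\Q}(\alpha)|=\ell^u$. Writing $\alpha$ in the integral basis of $L$, which is $\{1,(1+\sqrt{-pq})/2\}$ for odd $p$ (since $-pq\equiv 1\bmod 4$) and $\{1,\sqrt{-q}\}$ or $\{1,\sqrt{-2q}\}$ for $p^*\in\{-4,-8\}$, translates the norm identity into the stated Diophantine equation. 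In case (1), the two displayed lines arise from an internal dichotomy: from $A^2+pqB^2=4p\ell^u$ one deduces $p\mid A$; setting $A=pX_0$, the branch with $A,B$ both odd gives the first line (with $X=X_0$ odd), while the branch with $A,B$ both even (equivalently $\alpha\in\Z[\sqrt{-pq}]$) becomes $pX^2+qY^2=\ell^u$ after dividing by $4$, which is the second line. Cases (2), (3), (5), (6) skip this dichotomy because the integral basis has no denominator of $2$, so only one equation appears.

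Finally, the parity constraints on $X$ in each equation follow from reducing the Diophantine equation modulo $4$, and then modulo $8$ when required. For instance, in the second equation of case (1), using $p\equiv 3\bmod 4$, $q\equiv 1\bmod 4$, and $\ell^u\equiv \ell\bmod 8$ for odd $u$, one immediately sees that $X$ is odd iff $\ell\equiv 3\bmod 4$; a mod-$8$ refinement then distinguishes $X\equiv 0\bmod 4$ from $X\equiv 2\bmod 4$ according to the residue of $\ell p\bmod 8$, after substituting the quadratic-reciprocity relations implied by the genus condition $\legendre{\ell}{p}=-1$. The main obstacle is precisely this last step: the bookkeeping of mod-$8$ congruences combined with the supplementary reciprocity laws is tedious, and one must verify that the stated conditions cover exactly the residue classes compatible with the equation. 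The genuine algebraic content is confined to the genus-theoretic identification of $[\mathfrak{p}]$ as the unique element of order $2$ together with the factorization of $[\mathfrak{l}^u]$; everything else reduces to elementary modular arithmetic once the ambiguity in the signs of $X$ and $Y$ is absorbed into the unit $\pm 1$.
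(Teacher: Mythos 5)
Your overall architecture is the same as the paper's: the equations come from $\mathfrak{l}^u$ (resp.\ $\mathfrak{p}\mathfrak{l}^u$) being principal, the divisibility $p\mid X$ lets one pass to $(pX)$, the two lines in cases (1) and (4) reflect whether the generator lies in $\Z[\sqrt{-pq}]$ or is a genuine half-integer, and your genus-theoretic derivation of $\legendre{\ell}{p}=\mp1$ is an acceptable variant of the paper's argument, which instead reduces the norm equation modulo $p$ and invokes Lemma~\ref{rank4} (one quibble: the kernel of the genus character is the principal genus, i.e.\ the subgroup of \emph{squares}, not the $2$-torsion; your conclusion is still right because under Lemma~\ref{rank4} the squares are exactly the odd-order classes).

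The genuine gap is precisely at the step you defer to ``bookkeeping''. The mod-$8$ refinements cannot be extracted from $p\equiv 3\bmod 4$, $q\equiv 1\bmod 4$, $\ell^u\equiv\ell\bmod 8$ together with ``quadratic-reciprocity relations implied by $\legendre{\ell}{p}=-1$''. Reducing $pX^2+qY^2=\ell^u$ modulo $8$ only yields, for even $X$, that $\ell\equiv q$ or $q+4\bmod 8$ according as $4\mid X$ or $X\equiv 2\bmod 4$; to convert this into the stated dichotomy on $\ell p\bmod 8$ one must know $q\bmod 8$ in terms of $p\bmod 8$, and neither $\legendre{q}{p}=-1$ nor any reciprocity statement about $\ell$ supplies that. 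The paper's inputs, absent from your sketch, are: (i) for odd $p$, $-pq\equiv 5\bmod 8$, because $2$ is unramified (odd discriminant) and not split in $L$ --- this is where the standing convention that $\ell$ is the \emph{smallest} splitting prime (the case $\ell=2$ having been handled separately) is used, and your argument never invokes it; and (ii) for $p^*\in\{-4,-8\}$, $q\equiv 5\bmod 8$ from Lemma~\ref{rank4}(2), which is what produces the congruence conditions in cases (2), (5) and (6). These are not cosmetic: if $2$ were split, so $-pq\equiv 1\bmod 8$, the mod-$8$ conditions in the second line of case (1) would come out exactly reversed, so without (i) and (ii) the refined congruences --- the actual computational payoff of the theorem --- are not provable.
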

\begin{proof}
	We use \eqref{norm2E} of the previous page for the case $2u$.   In all cases we see that $X$ must be divisible by $p$ (even true for $p=2$) and we can replace $X$ by $pX$
	to get the new equation $(pX)^2 -p^*q Y^2 = 4\ell^up$ and equivalently $pX^2 -p^*/p qY^2=4\ell^u$. For odd $p$ we arrive at the first equation
	and we get the identity
	$$\legendre{\ell}{p} = \legendre{4\ell^u}{p}= \legendre{p^2+qY^2}{p} = \legendre{q}{p} =-1 $$ by Lemma \ref{rank4}. 

In the exponent $u$-case we have
$$\legendre{\ell}{p} = \legendre{4\ell^u}{p} = \legendre{X^2+pqY^2}{p} = \legendre{X^2}{p} =1.$$
The proof of the odd $p$ cases uses the fact that $-pq\equiv 5 \bmod 8$ since otherwise 2 is split or ramified. In the cases $p^*=-4,-8$ we see that $X,Y$ must be divisible by $2$ and we can divide the whole equation by $4$. Finally we use Lemma \ref{rank4} to see that $q\equiv 5 \bmod 8$ which gives further conditions.
\end{proof}

We have to check for all $p^*\in I_1$ if we find a suitable $q\in P^*_+$. For a given $p^*$ it is easy to check $q=2$ by hand. Therefore we can assume that $q=q^* \equiv 1 \bmod 4$. Our implementation is slightly different for the cases $p^*=-4, p^*=-8$, or $p^*=-p$ for $p\equiv 3 \bmod 4$. Let us only describe the case that $p^*$ is odd. We would like to use Theorem \ref{thm:cases}. Therefore we need to check the case $\ell=2$ by hand which means that using \eqref{norm2E} and \eqref{normE} we get:
\[
	X^2 - 4\ell^up  = p^*q Y^2 
	\text{ or }
X^2 -4\ell^u = p^*q Y^2.
\]
In the first case we check for all $X < \sqrt{4p\cdot 2^u}$
if $\Q(\sqrt{X^2-4p2^u})$ has class group isomorphic to $C_2\times C_u^r$. We also need to
test for all $X < \sqrt{4\cdot 2^u}$
if $\Q(\sqrt{X^2-4\cdot 2^u})$ has class group isomorphic to $C_2\times C_u^r$.

Now we start a loop over all odd primes $\ell$. For each $\ell$ we first check using Theorem \ref{BachSore} which assumes ERH, if $\ell$ can be the smallest splitting prime for a field with discriminant larger than $4\ell^u p$. If not we can skip this $\ell$ and all larger primes.

Now we compute $\legendre{\ell}{p}$ and decide, if we are in case (1) or (4) of Theorem \ref{thm:cases}. Suppose we are in case (1), the other case treated similarly.
Now compute for all odd $X$ such that $(pX)^2 < 4\ell^up$ the value
\[D:=(pX)^2-4\ell^up. \]
Note that $D$ is equal to $-pq$ up to a square, if we check a valid $X$. 
Now we check, if $\ell$ is the smallest splitting prime of $\Q(\sqrt{D})$ by checking for all primes
$q<\ell$ if $\legendre{D}{q}\ne 1$ and some extra test if $q\mid D$. If one Legendre symbol equals 1 then we can skip this $D$. Otherwise we factorize $D$ and finally check the class group.

Depending on the congruence we might have to check one of the other possible cases in (1).

We implemented the search in the computer algebra system Hecke \cite{FHHJ} which is based on the Julia language \cite{julia}. Our code can be downloaded from the web page 
https://math.uni-paderborn.de/en/ag/ca/research/exponent/. The mathematics of the code is already explained, but we used a lot of small details which makes the code more efficient. We do not try to explain the code here. On the same site we have put the list of computed fields. We remark that all the fields $L=\Q(\sqrt{p^*q})$ with exponent dividing 6 or 10 can be found in the databases \cite{LMFDB} and \cite{Malle}. Our computation shows that there are no further fields
with this property beyond the computed range of these databases.

\section*{Acknowledgment}
The research was done during a sabbatical of the second author at Paderborn University.

\end{document}